\theoremstyle{plain}
\newtheorem{theorem}{Theorem}[section]
\newtheorem{corollary}[theorem]{Corollary}
\newtheorem{lemma}[theorem]{Lemma}
\newtheorem{conjecture}[theorem]{Conjecture}
\theoremstyle{definition}
\newtheorem{remark}[theorem]{Remark}
\newtheorem{def/prop}[theorem]{Definition/Proposition}
\newcommand{\ind}{\mathds{1}}
\pgfplotsset{compat = 1.16}
\DeclareMathOperator{\Prob}{\mathbb{P}}
\DeclareMathOperator{\eps}{\varepsilon}
\DeclareMathOperator{\cA}{\mathcal{A}}
\DeclareMathOperator{\cE}{\mathcal{E}}
\DeclareMathOperator{\cN}{\mathcal{N}}
\DeclareMathOperator{\cP}{\mathcal{P}}
\title{\scshape $d$-connectivity of the random graph with restricted budget}
\author{Lyuben Lichev\footnote{E-mail: lyuben.lichev@univ-st-etienne.fr}}
\affil{Univ. Jean Monnet, Saint-Etienne, France}
\begin{document}

\maketitle

\begin{abstract}
In this short note, we consider a graph process recently introduced by Frieze, Krivelevich and Michaeli. 
In their model, the edges of the complete graph $K_n$ are ordered uniformly at random and are then revealed consecutively to a player called Builder. 
At every round, Builder must decide if they accept the edge proposed at this round or not. 
We prove that, for every $d\ge 2$, Builder can construct a spanning $d$-connected graph after $(1+o(1))n\log n/2$ rounds by accepting $(1+o(1))dn/2$ edges with probability converging to 1 as $n\to \infty$. 
This settles a conjecture of Frieze, Krivelevich and Michaeli.
\end{abstract}
\noindent
\textbf{Keywords:} random graph, connectivity, random process, sharp threshold, online algorithm, restricted budget\\
\noindent
\textbf{MSC class:} 05C80, 05C40

\section{Introduction}

The random graph process introduced by Erd\H{o}s and R\'enyi~\cite{ER59, ER60} is currently one of the most well-understood discrete stochastic processes. 
Several interesting variations of this process have been studied in the literature: examples include the classic $H$-free process~\cite{BK10,ESW95,OT01,War11} with special focus put on the triangle-free process~\cite{Boh09,BK21,FPGM20} as well as the recently analysed model of multi-source invasion percolation on the complete graph~\cite{A-BB23,LMP23}. 
Also recently, Frieze, Krivelevich and Michaeli~\cite{FKM22} introduced the following version: the edges of the complete graph $K_n$ are ordered uniformly at random and are then proposed one at a time to a player called \emph{Builder}. 
At first, Builder starts with an empty graph. 
At every round, Builder must decide if they want to add the currently proposed edge to their graph based only on the sequence of edges proposed up to now. 
Builder's goal is to reach a configuration with some graph property $\cP$. 
However, the player is allowed to wait for at most $t$ rounds, and moreover, they have the right to accept at most $b$ edges.

\paragraph{The random graph process on restricted budget: known results.} 
Before presenting the main result of this work, we briefly survey known results on this new model from~\cite{Ana22,FKM22}.
To do this, recall that a sequence of events $(E_n)_{n = 1}^{\infty}$ holds \emph{asymptotically almost surely} (which we abbreviate \emph{a.a.s.}) if $\mathbb P(E_n)\to 1$ as $n\to \infty$. 

Most of the results can be roughly divided into two types.
The first type corresponds to results where $t$ is the hitting time of a property $\mathcal P$ by the Erd\H{o}s-R\'enyi graph process and $b$ is a constant factor away from the minimum budget needed to ensure the property $\mathcal P$ deterministically.
Such results include:
\begin{itemize}
    \item Theorem~1 from~\cite{FKM22}, showing that a.a.s.\ Builder can attain a graph of minimum degree $d\ge 1$ at the hitting time $\tau_d$ for this property under the limitation $b\ge o_d n + \omega(\sqrt{n}\log n)$ for some constant $o_d\in (d/2, 3d/4]$. Similar result holds for $d$-connectivity for all $d\ge 3$.
    \item Theorem~3 from~\cite{FKM22}, showing that a.a.s.\ Builder can form a Hamiltonian cycle at the hitting time $\tau_2$ given that $b\ge C n$ for some constant $C > 1$.
\end{itemize}
We remark that the authors of~\cite{FKM22} characterise the constant $o_d$ explicitly and conjecture that a.a.s.\ Builder cannot construct a graph with minimum degree $k$ at the hitting time if $b\le (o_d-\eps)n$ for any $\eps > 0$.
This conjecture was recently disproved by Katsamaktsis and Letzter~\cite{KL24}.

The second type of results focus on constructing a graph with a property $\mathcal P$ where both $t$ and $b$ have asymptotically optimal values.
\begin{itemize}
    \item Theorem~2 from~\cite{FKM22} shows that, for every $\eps > 0$ and $d\ge 1$, a.a.s.\ Builder can attain a graph of minimum degree $d$ for $t\ge (1+\eps)n\log n/2$ and $b\ge (1+\eps)dn/2$.
    \item Theorem~4 from~\cite{FKM22} shows a similar result for the construction of a perfect matching.
    \item Anastos~\cite{Ana22} showed that the same conclusion holds for the construction of a Hamiltonian cycle.
\end{itemize}
In the light of Anastos' result~\cite{Ana22}, it would be interesting to know if the constant $C$ in Theorem~3 from~\cite{FKM22} could be improved to a $(1+o(1))$-factor.
Finally, related threshold results were shown in~\cite{FKM22} for the construction of trees and cycles of constant size.

\paragraph{The main result.} We start with some basic terminology allowing us to state our main result. For every $s\ge 0$, we denote the graph of all edges accepted by Builder during the first $s$ rounds by $G_s$. Of course, this graph depends on the choices of Builder during the first $s$ rounds. 
A \emph{strategy} of Builder is a (possibly random) function which, given an integer $s\le \tbinom{n}{2}$ and an $s$-tuple $(e_1, \ldots, e_s)$ of distinct edges of $K_n$, 
outputs 1 when Builder accepts the edge $e_s$ at round $s$ given that the edges $e_1, \ldots, e_{s-1}$ were proposed before in this order, and 0 if Builder ignores the edge $e_s$. 
A \emph{$(t,b)$-strategy} of Builder is a strategy that, for every $s\le t$, indicates whether to accept the edge presented at round $s$ based on the edges proposed before under the limitation that $|E(G_s)|\le b$.

In an earlier version of~\cite{FKM22}, the authors stated the following conjecture (now appearing as Theorem~6.1 and attributed to the current work).

\begin{conjecture}[Conjecture~8 in version~1 of~\cite{FKM22}]\label{conj main}
Fix $\eps > 0$ and a positive integer $d\ge 2$. 
If $t\ge (1+\eps) n\log n/2$ and $b\ge (1+\eps)dn/2$, then there exists a $(t,b)$-strategy of Builder such that $G_t$ is a.a.s.\ $d$-connected.
\end{conjecture}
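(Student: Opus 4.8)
The plan is to build a spanning subgraph of minimum degree $d$ as cheaply as possible and then, using a small reserve of edges, to certify that it has no separator of size smaller than $d$. Throughout I take $d\ge 2$, which is the range of the statement (for $d=1$ connectivity forces at least $n-1$ edges, beyond the budget). Fix a small constant $\eta=\eta(\eps,d)>0$. The minimum-degree part is essentially the threshold for ``minimum degree $d$'' handled in \cite{FKM22}: within $(1+o(1))n\log n/2$ rounds Builder can accept a subgraph $H_0$ of minimum degree $\ge d$ using $(1+o(1))dn/2$ edges, which is necessary in any case since $d$-connectivity requires minimum degree $\ge d$. This leaves a reserve of $\sim\eps dn/2=\Theta(n)$ edges and $\sim\eps n\log n/2$ rounds to spend on connectivity.

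Granted minimum degree $\ge d$, the graph $H$ Builder finally accepts is $d$-connected if and only if it has no separator of size at most $d-1$; taking the smaller side $A$ of such a separator, this reduces to showing that \emph{every} set $A$ with $2\le |A|\le n/2$ satisfies $|N(A)\setminus A|\ge d$ (the case $|A|=1$ being covered by the minimum degree). I would split this according to the size of $A$. For \emph{large} $A$, say $|A|\ge\alpha n$, I would devote part of the reserve to a sprinkling of $\Theta(n)$ edges that behaves like a random graph of density $\Theta(1/n)$, so that any two linearly sized sets are joined by $\gg d$ edges and hence $A$ has at least $d$ external neighbours. For \emph{small} $A$, $2\le |A|<\alpha n$, one needs expansion by at least $d$: the existence of a bad $A$ is equivalent to a set $W=A\cup(N(A)\setminus A)$ of size at most $|A|+d-1$ that captures all edges incident to $A$, i.e.\ a small, unusually dense configuration, and the task is to show no such $W$ occurs.

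The way the two regimes are met differs with $d$. For $d\ge 3$ the core $H_0$ is a nearly $d$-regular, ``random-like'' graph, and the heart of the matter is to prove that it contains no small dense set, which I would do by a union bound over sets $W$ of each small size that exploits both the extreme sparsity of $H_0$ and the randomness of the arrival order; this yields the small-set expansion directly, and the reserve is then only a safety net. For $d=2$ the core is typically a disjoint union of $\Theta(\log n)$ cycles: small sets already expand by $2$ (a short arc of a cycle has two external neighbours), but the graph is globally disconnected, so here the reserve edges do the real work, stitching the cyclic components into a single $2$-connected graph and destroying the potential cut vertices created in the process.

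The main obstacle is exactly this no-small-separator property at the \emph{minimal} density $(1+o(1))dn/2$: certifying small-set expansion by $\ge d$ for a graph that Builder assembles online and only partially controls. For the truly random graph at this density the analogous statement is the delicate half of the classical coincidence of the hitting times for minimum degree $d$ and for $d$-connectivity, and the challenge here is to reproduce it for the adaptively built graph while keeping the total number of accepted edges below $(1+\eps)dn/2$ and all decisions consistent with the online constraint. A clean alternative, building directly on the Hamiltonicity and perfect-matching results of \cite{FKM22}, is to have Builder extract $\lfloor d/2\rfloor$ edge-disjoint random-like Hamilton cycles (together with one perfect matching when $d$ is odd) from the single stream of proposals; this matches the budget $dn/2$ and the round count exactly, and reduces the theorem to the $d$-connectivity of such a union, into which the same no-small-separator analysis feeds.
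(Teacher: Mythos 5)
Your overall architecture for $d\ge 4$ --- build a near-$d$-regular core cheaply, reduce $d$-connectivity to set expansion, split bad sets by size, and keep a reserve for repairs --- is the right shape and parallels the paper's strategy for that range. But the two steps that carry all the difficulty are asserted rather than proved, and one of them fails as stated. First, the claim that a union bound over small sets $W$ ``yields the small-set expansion directly'', with the reserve serving only as a safety net, is not correct for the graph Builder can actually afford: any greedy construction fitting in the round budget leaves $\Theta(n/\mathrm{polylog}\, n)$ vertices that reach degree $d$ only through late, essentially arbitrary edges, and the corresponding statement in the paper (Lemma 3.8) is established only for $d\ge 4$, only after coupling the construction with a coloured configuration model, pruning low-degree vertices via an exploration process, and controlling repeated edge proposals --- and even then it shows that non-expanding sets of size up to $6$ \emph{do} occur on $o(n)$ vertices and must be repaired by targeted extra edges. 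For $d=3$ the situation is qualitatively worse: chains of degree-$2$ vertices produce sets $U$ of unbounded size with $|N(U)|=2$, which is exactly why the paper abandons this route for $d\in\{2,3\}$ and instead builds a cycle on $\tfrac34 n-o(n)$ (resp.\ $n-o(n)$) vertices and attaches the remaining vertices to it with $3$ (resp.\ $2$) edges each. You lump $d=3$ with $d\ge 4$ and offer no substitute argument; your $d=2$ sketch (stitch together a union of cycles) is plausible but equally unproved.

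The second, independent gap is the round accounting. Any repair phase that must deliver several targeted proposals to each of more than $n^{\eps}$ specified vertices (here there are $\Theta(n/\mathrm{polylog}\,n)$ degree-deficient vertices plus the vertices in small bad sets) needs, by the coupon-collector computation in the paper's Lemma 2.3, a window of $(1+\Omega(\eps_1))n\log n/2$ rounds that \emph{starts after only $o(n\log n)$ rounds have elapsed}. Your plan spends $(1-o(1))n\log n/2$ rounds reaching minimum degree $d$ --- unavoidable, since the proposal graph itself has isolated vertices until then --- and leaves only $\sim\eps n\log n/2$ rounds of reserve; in such a window a fixed vertex receives no useful proposal with probability about $n^{-\eps}$, so the union bound over $\Theta(n/\log n)$ vertices fails for every $\eps<1$. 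The paper's resolution is structural: compress the entire core construction into $o(n\log n)$ rounds, tolerate $o(n)$ vertices of deficient degree, and perform all degree completion and connectivity repair simultaneously in a single final window of length $(1+o(1))n\log n/2$ (Corollary 2.4). Finally, the ``clean alternative'' via $\lfloor d/2\rfloor$ edge-disjoint Hamilton cycles is not available off the shelf: the cited work constructs one Hamilton cycle at a higher round cost, extracting several edge-disjoint ones from one stream within $(1+\eps)n\log n/2$ rounds is not established, and a union of edge-disjoint Hamilton cycles is not automatically $d$-connected, so this reduction does not bypass the expansion analysis.
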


\noindent
Note that the statement of Conjecture~\ref{conj main} does not hold for $d=1$ since Builder needs at least $n-1$ edges to construct a connected graph. 
The current work is dedicated to the proof of Conjecture~\ref{conj main}.

\begin{theorem}\label{thm main}
Fix a positive integer $d\ge 2$. 
Then, for every $\eps > 0$, given $t \ge (1+\eps) n\log n/2$ and $b \ge (1+\eps)dn/2$, there exists a $(t,b)$-strategy of Builder such that a.a.s.\ $G_t$ is $d$-connected.
\end{theorem}

\noindent
In fact, our proof exhibits an appropriate strategy that does not depend on the value of $\eps$.\\

\noindent
\textbf{Outline of the proof of Theorem~\ref{thm main}.} The proof is divided into two parts. The idea when $d\ge 4$ is the following. 
Firstly, we construct consecutively $d$ almost perfect matchings within $s \ll n\log n$ rounds in a closely related model that allows certain repetitions of the edge proposals. 
Next, we show that the resulting graph $G$ a.a.s.\ contains a subgraph $G'$ on $n-o(n)$ vertices and minimum degree $d-1$ which in a sense is close to being $d$-connected. 
Then, we use $(1+o(1)) n\log n/2$ more rounds and $o(n)$ more accepted edges to simultaneously build upon the graph $G'$ and transform it into a $d$-connected graph, and connect each of the vertices outside $G'$ to $G'$ itself via $d$ edges. 

Unfortunately, this approach fails when $d\in \{2,3\}$. 
In this case, we provide an alternative argument. Firstly, we construct a set of long paths. Then, we construct one cycle of length $n-o(n)$ (when $d=2$) and $3n/4-o(n)$ (when $d=3$). 
Finally, we use this long cycle as a ``skeleton'' to which we connect all remaining vertices by 2 edges (when $d=2$) and by 3 edges (when $d=3$). 
This strategy bears a resemblance to the approach in Section~4.2.2 in~\cite{FKM22}.\\

\noindent
\textbf{Notations.} As usual, upper and lower integer parts that are of no importance for the arguments are omitted for better readability. 
In this paper, $\omega = \omega(n)$ is a fixed function satisfying $1\ll \log \omega\ll \log\log n$, and $p = p(n) = \log n/n$. 
Also, for two sets $U\subseteq V$, we write $V\setminus U$ for the set of all elements contained in $V$ but not in $U$.

For a graph $G$, we denote by $V(G)$ the vertex set of $G$ and by $E(G)$ the edge set of $G$. 
Moreover, $\Delta(G)$ stands for the maximum degree of $G$ and, given a vertex $v$ in $G$, we denote by $\deg_G(v)$ the degree of $v$ in $G$. 
For a graph $G$ and a subgraph $H\subseteq G$ or a set of vertices $U\subseteq V(G)$, we write $G\setminus H$ (resp. $G\setminus U$) to denote the graph obtained from $G$ by deleting all vertices in $H$ (resp. in $U$) from $G$.
The neighbourhood of a vertex $v$ in $G$ is denoted by $N_G(v)$ and, given a set of vertices $U\subseteq V(G)$, $N_G(U) = (\bigcup_{v\in U} N_G(v))\setminus U$. 
Note that $\deg_G$ and $N_G$ are simply written $\deg$ and $N$ in case no ambiguity arises. Moreover, for a set $S\subseteq V(G)$, we denote by $G[S]$ the graph induced from $G$ by the set $S$. 
Given a connected graph (or multigraph) $G$, a \emph{cutset} is a set of vertices $U\subseteq V(G)$ satisfying that $G\setminus U$ is a disconnected graph. 
Moreover, $G$ is \emph{$d$-connected} if $|V(G)|\ge d+1$ and there is no cutset of $G$ of size at most $d-1$. 
Finally, the vertices of $K_n$ are denoted $\{w_1,\ldots,w_n\}$.

\section{Preliminaries}

First of all, we recall one instance of the well-known Chernoff's inequality.
\begin{lemma}[see Theorem 2.1 in \cite{JLR00}]\label{lemma chern}
Given a binomial random variable $X$, for every $t\ge 0$,
\begin{equation*}
\Prob(|X - \mathbb E X| \ge t) \le 2\exp \left( - \frac {t^2}{2 (\mathbb E X + t/3)} \right).
\end{equation*}
\end{lemma}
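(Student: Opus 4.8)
The plan is to establish the two one-sided tail bounds separately and then combine them by a union bound, which accounts for the factor $2$ in the statement. Write $X = \sum_{i=1}^{n} X_i$ as a sum of independent Bernoulli random variables with common parameter $p$, and set $\mu = \esp X = np$. For the upper tail I would use the exponential Markov (Chernoff) method: for every $\lambda > 0$,
\begin{equation*}
\Prob(X \ge \mu + t) = \Prob\big(e^{\lambda X} \ge e^{\lambda(\mu + t)}\big) \le e^{-\lambda(\mu + t)}\, \esp e^{\lambda X}.
\end{equation*}
Since the $X_i$ are independent, the moment generating function factorises as $\esp e^{\lambda X} = \big(1 + p(e^{\lambda} - 1)\big)^{n}$, and the elementary bound $1 + x \le e^{x}$ gives $\esp e^{\lambda X} \le \exp\!\big(\mu(e^{\lambda} - 1)\big)$.

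Substituting this estimate and optimising the resulting exponent over $\lambda > 0$, the minimiser is $\lambda = \log(1 + t/\mu)$, which yields
\begin{equation*}
\Prob(X \ge \mu + t) \le \exp\!\big(-\mu\, \varphi(t/\mu)\big), \qquad \varphi(x) := (1+x)\log(1+x) - x.
\end{equation*}
An entirely analogous computation applied to $e^{-\lambda X}$ handles the lower tail and gives $\Prob(X \le \mu - t) \le \exp(-\mu\,\varphi(-t/\mu))$; this tail is moreover vacuous once $t > \mu$, since $X \ge 0$.

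It then remains to supply the elementary analytic inequalities
\begin{equation*}
\varphi(x) \ge \frac{x^2}{2(1 + x/3)} \quad (x \ge 0), \qquad \varphi(-x) \ge \frac{x^2}{2} \quad (0 \le x \le 1),
\end{equation*}
the second of which is strictly stronger than what is needed. Both I would prove by calculus from the integral representation $\varphi(x) = \int_0^x \log(1+s)\,ds$, checking that the difference between the two sides vanishes at $x = 0$ and has nonnegative derivative thereafter, so that it stays nonnegative. Feeding the first inequality into the upper-tail estimate bounds $\Prob(X \ge \mu + t)$ by $\exp\!\big(-t^2/(2(\mu + t/3))\big)$, while the second gives $\Prob(X \le \mu - t) \le \exp(-t^2/(2\mu))$; since $\mu \le \mu + t/3$, the latter is also at most $\exp\!\big(-t^2/(2(\mu + t/3))\big)$. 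Adding the two one-sided bounds produces the claimed factor of $2$ and finishes the argument.

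I expect the only genuinely delicate point to be the verification of $\varphi(x) \ge x^2/(2(1+x/3))$: the Chernoff machinery and the optimisation over $\lambda$ are entirely routine, as is the moment generating function computation. Mild care is needed at the boundary cases ($t = 0$, where the statement is trivial, and $t \ge \mu$ in the lower tail), but these are immediate and do not affect the main estimate.
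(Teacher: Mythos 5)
Your argument is correct and is precisely the standard derivation of this inequality: the paper itself gives no proof, quoting the bound directly from Theorem~2.1 in Janson--\L{}uczak--Ruci\'{n}ski, where it is established by exactly the route you describe (exponential Markov inequality, factorisation of the moment generating function, optimisation at $\lambda = \log(1+t/\mu)$, and the elementary estimates $\varphi(x)\ge x^2/(2(1+x/3))$ for $x\ge 0$ and $\varphi(-x)\ge x^2/2$ for $0\le x\le 1$). Nothing is missing; the only point worth stating explicitly when writing it up is the verification of the two analytic inequalities, which, as you note, follows from the integral representation $\varphi(x)=\int_0^x\log(1+s)\,ds$ and a derivative comparison.
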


\noindent
Next, recall that for a set $S$ and a real number $p\in [0,1]$, the \emph{binomial random subset} $\mathrm{Bin}(S,p)$ is a subset of $S$ in which all elements of $S$ appear independently with probability $p$. 
The next lemma is a classic comparison result between $\mathrm{Bin}(S,p)$ and the \emph{uniform random $m$-element subset of $S$} denoted $\mathrm{Bin}(S,m)$.

\begin{lemma}[see \cite{L90} and Theorem~1.4 in~\cite{FK16}]\label{lemma comp}
Fix a sequence of sets $(S_n)_{n\ge 1}$ satisfying $|S_n| = n$ for all $n\ge 1$, and functions $m = m(n)$ and $\nu = \nu(n)$ satisfying $m\to \infty$ and $m^{-1/2} \nu\to \infty$ as $n\to \infty$. 
Set $p_- = p_-(n) = \max(0,(m - \nu)/n)$ and  $p_+ = p_+(n) = \min((m + \nu)/n,1)$. 
Then, there is a coupling of the sets $\mathrm{Bin}(S_n,p_-)$, $\mathrm{Bin}(S_n,m)$ and $\mathrm{Bin}(S_n,p_+)$ so that a.a.s.\ $\mathrm{Bin}(S_n,p_-)\subseteq \mathrm{Bin}(S_n,m)\subseteq \mathrm{Bin}(S_n,p_+)$.
\end{lemma}

\begin{remark}\label{rem comp}
Note that both~\cite{L90} and Theorem~1.4 in~\cite{FK16} compare the \emph{binomial random graph} $G(n,p)$, where edges appear independently with probability $p\in [0,1]$, with the original \emph{Erd\H{o}s-R\'enyi random graph} $G(n,m)$, which is a uniformly chosen graph among the graphs with $n$ vertices and $m$ edges. 
However, the proof method extends verbatim in the more general setting of Lemma~\ref{lemma comp}.
\end{remark}

\noindent
The next lemma states and proves a useful property of the Erd\H{o}s-R\'enyi process.

\begin{lemma}\label{lemma eq union bound}
Fix an integer $k\ge 1$, a positive function $\mu = \mu(n)$ satisfying $\tfrac{\log\log n}{\log n}\ll \mu\ll 1$, and positive integer functions $m = m(n)$ and $M = M(n)$ satisfying $m = o(n\log n)$ and $M - m = (1+\mu)n\log n/2$. 
For every $i\in [n]$, fix a set $S_i\subseteq V(K_n)$ (possibly depending on the first $m$ rounds of the Erd\H{o}s-R\'enyi process) of size $|S_i| \ge (1-\mu/2)n$. 
Then, the following event holds a.a.s.: for every $i\in [n]$, at least $k$ edges connecting $w_i$ and $S_i$ are proposed to Builder during the rounds in the interval $[m+1, M]$.
\end{lemma}
\begin{proof}
Recall that $p = \log n/n$ and set $N = \tbinom{n}{2}$. 
Then, conditionally on the event that the random graph $G(n,p)$ has at least $m$ edges, it dominates the graph $G_m$ stochastically with respect to inclusion.
Thus,
\begin{align*}
\Prob(\Delta(G_m)\ge 3\log n)
&\le \Prob(\Delta(G(n,p))\ge 3\log n\mid |E(G(n,p))|\ge m)\\
&\le \frac{\Prob(\Delta(G(n,p))\ge 3\log n)}{\Prob(|E(G(n,p))|\ge m)} \le  \frac{n\Prob(\deg_{G(n,p)}(w_1)\ge 3\log n)}{1-\Prob(|E(G(n,p))| < m)},
\end{align*}
where the last inequality follows from a union bound over all $n$ vertices in $K_n$.
Using Chernoff's inequality for the binomial distributions $\mathrm{Bin}(n-1,p)$ and $\mathrm{Bin}(N,p)$ shows that
\begin{equation*}
\begin{split}
\Prob(\Delta(G_m)\ge 3\log n)
&\le (1+o(1)) n\Prob(\mathrm{Bin}(n-1, p)\ge 3\log n)\\
&\le (1+o(1)) 2n\exp\left(-\frac{(3\log n)^2}{2((n-1)p+\log n)}\right) = n^{-5/4+o(1)} = o(1).
\end{split}
\end{equation*}
We condition on the a.a.s.\ event $\{\Delta(G_m)\le 3\log n\}$ and expose the graph $G_m$.

Next, we fix $i\in [n]$ and consecutively expose the edges added at rounds $m+1,\ldots,M$ in the Erd\H{o}s-R\'enyi process. 
We estimate the probability that only $j\in [0,k-1]$ edges connecting $w_i$ and $S_i$ have been proposed to Builder during the rounds $m+1,\ldots,M$.
On the one hand, at each of these rounds, the probability that the new edge goes between the vertex $w_i$ and the set $S_i$ is bounded from above by $\tfrac{n-1}{N-M}$. 
On the other hand, the probability that the edge proposed at round $\ell\in [m+1,M]$ does not go between $w_i$ and $S_i$ is bounded from above by \[1 - \frac{(1-\mu/2)n-j-\Delta(G_m)}{N-\ell+1}\le 1 - \frac{(1-\mu/2)n-j-3\log n}{N-\ell+1}.\]
Hence, the probability that less than $k$ edges connecting $w_i$ and $S_i$ have been proposed to Builder during the rounds $m+1,\ldots,M$ is bounded from above by
\begin{equation}\label{eq union bound}
\begin{split}
&\sum_{j=0}^{k-1} \sum_{J\subseteq [m+1, M];\, |J|=j} \left(\frac{n-1}{N-M}\right)^j\prod_{\ell\in [m+1,M]\setminus J} \left(1 - \frac{(1-\mu/2)n-j-3\log n}{N-\ell+1}\right)\\
=\; 
&O\left(\sum_{j=0}^{k-1} \binom{M-m}{j} \left(\frac{2}{n}\right)^j \prod_{\ell=m+1}^{M}\left(1 - \frac{(2-\mu)n - O(\log n)}{n^2 - O(n\log n)}\right)\right)\\
=\; 
&O\left((\log n)^{k-1} \exp\left(-\left(1+O\left(\frac{\log n}{n}\right)\right)\frac{(M-m)(2-\mu)}{n}\right)\right)\\
=\;
&O\left(\frac{\exp((k-1)\log\log n-(1+o(1))\mu \log n/2)}{n}\right) = o\left(\frac{1}{n}\right),
\end{split}
\end{equation}
where for the third equality we used that $(1+\mu)(2-\mu)/2 = 1+\mu/2+o(\mu)$.
The proof is completed by a union bound over all $n$ vertices of $K_n$.
\end{proof}

\noindent
Our last preliminary result is a simple deterministic lemma providing a way to build $d$-connected graphs from smaller $d$-connected graphs.

\begin{lemma}\label{lem last step}
Fix an integer $d\ge 1$ and two graphs $H\subseteq G$ such that $H$ is $d$-connected and every vertex in $G\setminus H$ is adjacent to at least $d$ vertices in $H$.
Then, $G$ is also a $d$-connected graph.
\end{lemma}
\begin{proof}
Fix any set $U\subseteq V(G)$ of size $|U| = d-1$. Then, since $H$ is a $d$-connected graph, $H\setminus U$ is a connected graph. 
Moreover, every vertex in $G\setminus H$ is connected by an edge in $G\setminus U$ to a vertex in $V(H)\setminus U$. Hence, $G\setminus U$ is a connected graph, which finishes the proof.
\end{proof}

\section{\texorpdfstring{Proof for the case $d\ge 4$}{}}

We continue with the proof of Theorem~\ref{thm main} in the case $d\ge 4$. 
To start with, we define an auxiliary process. 
Next, we describe a strategy of Builder in the auxiliary process that within $o(n\log n)$ rounds constructs a well-connected random graph, which we relate to a configuration model with degree sequence of maximum degree $d$. 
Then, we couple the above strategy with a valid strategy of Builder in the original model, and thus ensure that Builder can a.a.s.\ construct a large graph that is almost $d$-connected in a certain sense within $o(n\log n)$ rounds. 
Finally, by using $(1+o(1))n\log n/2$ additional rounds, we boost this graph to a $d$-connected graph and thus complete the analysis.\\

\noindent
\textbf{Stage 1: the auxiliary process and Builder's strategy.} ($o(n\log n)$ rounds, $dn/2 + o(n)$ accepted edges)\\

\noindent
The auxiliary process is divided into $d$ independent \emph{iterations}, and every iteration consists of the first $n\log n/\omega$ rounds of an Erd\H{o}s-R\'enyi process. 
At every iteration, we let Builder construct a matching by greedily accepting every edge that does not share a vertex with the ones accepted up to now during the current iteration. 
Finally, we colour the edges constructed at iteration $i\in [d]$ in colour $i$; note that one edge may be coloured in more than one colour.

Using that the $d$ matchings are independent, by conditioning on the set of vertices of degree 1 at every iteration, 
we obtain a graph distributed according to a configuration model in which every vertex is incident to at most $d$ half-edges in different colours, and monochromatic half-edges are matched uniformly at random. 
Our motivation to introduce this auxiliary process is rooted in the fact that the configuration model is quite well-understood and easier to work with.

The inconvenience in using the above auxiliary model is that Builder will not be able to construct some of the edges in the $d$ matchings above in the original model.
We call an edge \emph{repeated} if it was proposed at two or more iterations in the auxiliary model.
We note that, in order to compare the strategy of Builder in the auxiliary process and in the original one, it is sufficient to ignore the rounds at which repeated edges are proposed for a second, third, etc. time.

The next lemma provides a convenient way to analyse the set $R$ of repeated edges in the graph $G$ constructed in the auxiliary process: it turns out that this set of edges is dominated by a Bernoulli percolation on $G$ with a suitably chosen parameter.
This will be useful in the third stage of our analysis where not only the number but also the distribution of the edges in $R$ will matter.

\begin{lemma}\label{lemma repeated}
Conditionally on the graph $G$ constructed during the auxiliary process (where only the edges are revealed but not their colours), one may couple the set $R$ of repeated edges in $G$ with a binomial random graph $\widehat G\sim G(n,p)$ with $p = p(n) = \log n/n$ so that a.a.s.\ $R\subseteq E(G)\cap E(\widehat G)$.
\end{lemma}
\begin{proof}
Define $q = q(n) = 3 \log n/\omega n$ and, for every $i\in [d]$, let $S_i$ be the set of edges proposed during the $i$-th stage of the auxiliary process. 
By Lemma~\ref{lemma comp} and a union bound over $i\in [d]$, one may couple $(S_i)_{i=1}^d$ with an i.i.d.\ family of random graphs $(\widehat G_i)_{i=1}^d$ with distribution $G(n, q)$ so that a.a.s.\ $S_i\subseteq E(\widehat G_i)$ for all $i\in [d]$.
Moreover, for every edge $e\in E(K_n)$, the probability that this edge appears in at least two of $(\widehat G_i)_{i=1}^d$ given that it appears in at least one of $(\widehat G_i)_{i=1}^d$ is
\[\frac{1 - (1 - q)^d - dq(1-q)^{d-1}}{1 - (1-q)^d}\le (d-1)q\le p,\]
which concludes the proof.
\end{proof}

\noindent
\textbf{Stage 2: deleting the vertices of degree at most $d-2$.}\\

\noindent
Let us first remark that the second stage does not require any additional rounds or accepted edges. 
Instead, it serves to analyse the graph constructed by Builder in the auxiliary process and prepares the terrain for the additional boosting at Stage 3.
The next lemma roughly says that a.a.s.\ the greedy construction at each of the $d$ iterations of the auxiliary process outputs an almost perfect matching.

\begin{lemma}\label{lemma non max deg}
A.a.s.\ the set of vertices that do not participate in some of the $d$ matchings in the auxiliary process is of size at most $\omega^3 n/\log n$.
\end{lemma}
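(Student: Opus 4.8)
The plan is to bound the expected number of vertices left unmatched in a single iteration by $(1+o(1))n\omega/\log n$, then sum over the $d$ iterations and finish with Markov's inequality. The gap between the exponent of $\omega$ in this per-iteration bound and the exponent $3$ in the statement leaves a factor of order $\omega^2$ of slack, so a first-moment argument will be more than enough and no concentration machinery is needed.

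Fix one of the $d$ iterations, and let $u_t$ be the number of currently unmatched vertices after Builder's greedy rule has processed the first $t$ of the $m=\omega^{-1}n\log n$ proposed edges, so that $u_0=n$ and the sequence $(u_t)_{t\ge 0}$ is nonincreasing along every sample path. The observation driving the computation is that no edge lying entirely inside the current set of free vertices can have been proposed before: if an edge $\{a,b\}$ with $a,b$ free at time $t$ had appeared at some earlier time $t'<t$, then $a$ and $b$ would also have been free at time $t'$, and the greedy rule would have matched them, a contradiction. Hence, writing $\cF_t$ for the history up to time $t$, conditionally on $\cF_t$ the $(t+1)$-th edge is uniform over the $\binom n2-t$ not-yet-proposed edges, all $\binom{u_t}2$ edges inside the free set are still available, and accepting such an edge decreases $u_t$ by exactly $2$. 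This gives the exact recursion
\[
\esp[u_{t+1}\mid \cF_t]=u_t-\frac{u_t(u_t-1)}{\binom n2-t}.
\]

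Writing $f(t)=\esp u_t$, taking expectations, and using convexity of $x\mapsto x(x-1)$ together with $\binom n2-t\le n^2/2$, I would deduce
\[
f(t+1)\le f(t)-\frac{f(t)(f(t)-1)}{\binom n2-t}\le f(t)\Big(1-\frac{f(t)}{n^2}\Big),
\]
where the last step uses $f(t)-1\ge f(t)/2$, valid as long as $f(t)\ge 2$. Rearranging gives $1/f(t+1)\ge 1/f(t)+1/n^2$, and telescoping from $0$ to $m$ yields $f(m)\le n^2/(n+m)\le n\omega/\log n$, using $m=\omega^{-1}n\log n$ and $\log n/\omega\to\infty$; the alternative case, in which $f$ drops below $2$ before time $m$, only makes $f(m)$ smaller since $f$ is nonincreasing.

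Finally, let $Z$ denote the number of vertices that are unmatched in at least one of the $d$ iterations. By linearity of expectation over the iterations, $\esp Z\le d\,f(m)\le dn\omega/\log n$, so Markov's inequality gives
\[
\Prob\!\left(Z\ge \frac{\omega^3 n}{\log n}\right)\le \frac{dn\omega/\log n}{\omega^3 n/\log n}=\frac{d}{\omega^2}=o(1),
\]
which is exactly the claim. I expect the only genuinely delicate point to be the justification of the exact one-step recursion — specifically the observation that every edge inside the free set is still unproposed, which is what pins down the conditional acceptance probability — after which turning the discrete recursion into the bound $n^2/(n+t)$ and invoking Markov are routine.
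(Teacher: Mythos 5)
Your proof is correct, and it reaches the lemma by a genuinely different route from the paper, even though both are elementary first-moment arguments about the same greedy matching. The paper inverts the process: it fixes a target matching size $s=\tfrac12\bigl(n-\omega^{2}(\log n)^{-1}n\bigr)$, writes the time to reach it as a sum of waiting times $T_i$ (each bounded by a geometric variable with parameter $\tbinom{n}{2}^{-1}\tbinom{n-2(i-1)}{2}$), computes $\esp\bigl[\sum_i T_i\bigr]=n\log n/(2\omega^{2})$ by telescoping, and applies Markov's inequality to the \emph{time}, concluding that each matching a.a.s.\ reaches size $s$ within the allotted $\omega^{-1}n\log n$ rounds, which leaves $\omega^{2}(\log n)^{-1}n$ unmatched vertices per iteration on that event. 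You instead track the expected number of free vertices at a fixed time, turn the exact one-step recursion into the deterministic inequality $1/f(t+1)\ge 1/f(t)+1/n^{2}$ via Jensen, and apply Markov to the \emph{count} of unmatched vertices at the end. Your key observation --- that every edge inside the current free set is still unproposed, so the conditional acceptance probability is exactly $\binom{u_t}{2}/\bigl(\binom{n}{2}-t\bigr)$ --- is the same fact the paper uses implicitly when it asserts the geometric law of $T_i$ in the without-replacement model, but you make it explicit, which is arguably cleaner. Your route gives a slightly stronger per-iteration bound ($n\omega/\log n$ in expectation versus $n\omega^{2}/\log n$ a.a.s.), though both have ample slack against the target $\omega^{3}(\log n)^{-1}n$; the paper's telescoping sum is computationally a bit lighter, while your handling of the edge case $f(t)<2$ via monotonicity of $f$ correctly closes the one place where your recursion could break down.
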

\begin{proof}
Let us consider the matching constructed at a fixed iteration. Define $s = (n - \omega^2 n/\log n)/2$. For every $i\in [s]$, denote by $T_i$ the time needed to extend the partial matching of size $i-1$ to a matching of size $i$. 
Then, $T_i$ is stochastically dominated by a geometric distribution with parameter $\tbinom{n-2(i-1)}{2}{/}\tbinom{n}{2}$: indeed, the set of $n-2(i-1)$ vertices isolated in the time interval $[T_{i-1}, T_i-1]$ spans $\tbinom{n-2(i-1)}{2}$ edges and the total number of edges yet to be revealed is less than $\tbinom{n}{2}$. Therefore, 
\begin{align*}
\mathbb E\left[\sum_{i=1}^{s} T_i\right] 
&\le\; n(n-1) \sum_{i=1}^{s} \frac{1}{(n-2i+2)(n-2i+1)} = (1+o(1)) \frac{n^2}{2} \sum_{j=1}^{2s} \frac{1}{(n-j)(n-j-1)}\\
&=\; (1+o(1)) \frac{n^2}{2} \left(\frac{1}{n-2s-1} - \frac{1}{n}\right) = (1+o(1))\frac{\omega^2 n}{2\log n}.
\end{align*}
Finally, Markov's inequality implies that, at each of the $d$ iterations, a.a.s.\ $\sum_{i=1}^{s} T_i \le \omega^3 n/\log n$, which completes the proof.
\end{proof}
\noindent
We remark that an alternative proof of the above lemma uses the fact that a.a.s.\ the uniform random subgraph of $K_n$ on $n\log n/\omega$ edges has maximum independent set of size less than $\omega^3 n/\log n$.

At this point, we condition on the a.a.s.\ event of Lemma~\ref{lemma non max deg} and on the coloured vertex degrees, that is, we associate to every vertex in the $i$-th matching a half-edge of colour $i$. 
As the $d$ matchings are chosen uniformly at random and independently of each other, 
the coloured graph constructed in the auxiliary process is distributed according to a configuration model where the half-edges are paired uniformly at random while respecting the colouring. 
From this alternative viewpoint, edges with more than one colour are seen as multiedges with different colours.

Next, we describe an algorithm whose purpose it to find an almost-spanning subgraph with minimum degree $d-1$ where almost all vertices have degree $d$. 
While this graph is not $d$-connected, it will turn out that a.a.s.\ one can add $o(n)$ edges to make it $d$-connected.
Thus, it is this well-connected subgraph that will be seen as a ``skeleton'' to which all remaining vertices will attach via $d$ edges in the next $(1+o(1))n\log n$ stages.

The remainder of this stage focuses on the analysis of an exploration process $(\cA_s, \cP_s, \cN_s)_{s\ge 0}$ gradually revealing the random graph $G$ sampled from the coloured configuration model introduced above. 
In this exploration process, $\cA_s$ represents the set of \emph{active} vertices at step $s$ which are ``found'' by the process but we still do not know their entire neighbourhood in $G$, $\cP_s$ represents the set of \emph{passive} vertices whose entire neighbourhood has been revealed before step $s$ and $\cN_s$ represents the \emph{non-explored} vertices which were not yet ``found'' by the process. 
Our goal in the end of the exploration process is to find the \emph{$(d-1)$-core of $G$} (that is, the largest subgraph of $G$ with minimum degree at least $d-1$) without revealing its edges and ensure that only $o(n)$ vertices in this $(d-1)$-core have degree $d-1$.


We turn to the definition of the exploration process.
In the beginning, we set $\cA_0 = \{v\in V(G): \deg(v)\le d-2\}$, $\cP_0 = \emptyset$ and $\cN_0 = V(G)\setminus \cA_0$. At step $s\ge 1$ of the process, look for a vertex in $\cA_{s-1}$ incident to at most $d-2$ edges that are still unrevealed throughout the exploration process; we denote this set of vertices by $\cA'_{s-1}$. 
If $\cA'_{s-1} = \emptyset$, terminate the exploration process and set $H = G[\cA_{s-1}\cup \cP_{s-1}]$. 
(In this case, by construction, $H$ is the $(d-1)$-core of $G$.)
Otherwise, pick the vertex $v$ in $\cA'_{s-1}$ with the smallest index (recall that $V(G) = \{w_1,\ldots,w_n\}$) and explore its neighbours, which amounts to setting
$$\cA_s = (\cA_{s-1}\setminus \{v\})\cup (N(v)\cap \cN_{s-1}),\; \cP_s = \cP_{s-1}\cup \{v\} \text{ and } \cN_s = \cN_{s-1}\setminus N(v).$$
Define $\tau$ as the unique integer when $\cA_{\tau}' = \emptyset$.
We note that the exploration process formally describes the usual way of obtaining the $(d-1)$-core of a graph by iteratively deleting vertices of degree at most $d-2$. 
However, since this core must remain a random graph in our case, we carefully keep track of the randomness exposed throughout these iterative deletions.

\begin{lemma}\label{lemma exp process}
The exploration process $(\cA_s, \cP_s, \cN_s)_{s\ge 0}$ a.a.s.\ terminates after at most $n/\omega$ steps.
\end{lemma}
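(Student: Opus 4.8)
The plan is to run the exploration process and to control the number of \emph{processable} vertices $X_s := |\cA'_s|$, showing that it has a strong negative drift and hence returns to $0$ after few steps. First I would condition on the a.a.s.\ event of Lemma~\ref{lemma non max deg}, so that the set $B$ of vertices of $G$ of degree at most $d-1$ (that is, isolated in some matching) satisfies $|B|\le b$, where $b := \omega^3(\log n)^{-1}n$. Since no half-edge is matched at the start, every vertex of $\cA_0$ is processable, whence $X_0 = |\cA_0|\le |B|\le b$. I would also record the elementary deterministic bound $|\cA_s\cup\cP_s|\le b + (d-2)s$, which holds because processing a vertex exposes at most $d-2$ new edges and so adds at most $d-2$ vertices to the discovered set $\cA_s\cup\cP_s$; in particular the number of steps of the process equals the final value of $|\cP_s|$.

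Next I would compute the one-step drift of $X_s$. At step $s$ Builder processes a vertex $v$ with $u_v\le d-2$ unmatched half-edges, and in the configuration model each such half-edge is paired with a uniformly chosen unmatched half-edge; thus $X_s$ decreases by $1$ (for $v$ itself) and increases by the number of the $u_v$ exposed edges that render some vertex processable. A vertex becomes newly processable only when an exposed edge lands either on a still-undiscovered vertex of degree $d-1$, or on an already active vertex of degree $d$ that had been hit exactly once before. Bounding the number of unmatched half-edges carried by such vertices, namely at most $(d-1)|B|$ from the former and at most $(d-1)|\cA_s|$ from the latter, against the total number of unmatched half-edges, which is $(1-o(1))dn$ as long as $s = o(n)$, I would obtain that each exposed edge creates a new processable vertex with probability $O(\omega^{-1})$ on the event $\{|\cA_s\cup\cP_s| < \omega^{-1}n\}$. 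Consequently, writing $(\cF_s)_{s\ge 0}$ for the natural filtration of the exploration, on that event $\esp[X_{s+1}-X_s \mid \cF_s]\le -1 + (d-2)\,O(\omega^{-1}) \le -\tfrac12$ for $n$ large.

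To finish, I would introduce the stopping time $\tau := \min\{s\ge 0 : X_s = 0 \text{ or } |\cA_s\cup\cP_s|\ge \omega^{-1}n\}$. By the drift bound the process $Y_s := X_{s\wedge\tau} + \tfrac12(s\wedge\tau)$ is a supermartingale, so $\esp[Y_{s\wedge\tau}]\le Y_0 = X_0\le b$ for every $s$; since $\tau\le n$ surely, letting $s\to\infty$ gives $\tfrac12\esp[\tau]\le b$, that is $\esp[\tau]\le 2b$. As $b = \omega^3(\log n)^{-1}n$ and $\omega^4(\log n)^{-1}\to 0$, Markov's inequality yields $\tau < \omega^{-1}n/(2(d-2))$ a.a.s.; on this event $|\cA_\tau\cup\cP_\tau|\le b + (d-2)\tau < \omega^{-1}n$, so $\tau$ cannot be triggered by the size threshold and must satisfy $X_\tau = 0$. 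Hence the exploration terminates after $\tau < \omega^{-1}n$ steps, as claimed. The main obstacle is exactly the self-referential nature of the drift estimate: the probability that an exposed edge produces a new processable vertex is small only while $\cA_s$ remains small, and the role of the stopping time $\tau$ is precisely to make this circular dependence rigorous.
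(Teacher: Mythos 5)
Your argument is correct and takes essentially the same approach as the paper: both rest on the key estimate that, while the explored set has size $O(\omega^{-1}n)$, each exposed half-edge creates a new processable vertex with probability only $O(\omega^{-1})$, so $|\cA'_s|$ has drift close to $-1$. The paper converts this into the conclusion by dominating $(|\cA'_s|)_s$ with an explicit biased random walk and bounding its number of up-steps by Markov's inequality, whereas you run a supermartingale/optional-stopping argument on the stopped process to bound $\esp[\tau]$ and then apply Markov; the difference is purely in the packaging of the drift bound.
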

\begin{proof}
First of all, Lemma~\ref{lemma non max deg} implies that $|\cA'_0| = |\cA_0|\le \omega^3 n/\log n$. 
Fix an integer $s\le n/\omega$. 
If~$s\le \tau-1$, then $|\cA'_{s+1}| \ge |\cA'_s|-1$ and equality holds unless the vertex $v$ explored at step $s$ connects (via an edge revealed at the current step) to some vertex in $\cA_s$ or to some vertex in $\cN_s$ of degree at most $d-1$.
Hence, the probability that the number of active vertices with at most $d-2$ unmatched edges decreases is at least 
\[1 - (d-2)\frac{|\cA_s|+|\{v\in V(G): \deg_G(v)\le d-1\}|}{|\cN_s|}\ge 1 - d\frac{dn/\omega + o(n/\omega)}{n - dn/\omega}\ge 1 - \frac{2d^2}{\omega}.\]
Hence, $(|\cA'_{\min(s,\tau)}|)_{s\ge 0}$ is dominated by the process $(X_{\min(s,\tau)})_{s\ge 0}$ where $(X_s)_{s\ge 0}$ is a random walk that makes a step $-1$ with probability $1 - 2d^2/\omega$ and step $+\,d$ with probability $2d^2/\omega$. We conclude that
$$\Prob(|\cA'_{\min(n/\omega,\tau)}|\ge 1) \le \Prob(X_{n/\omega}\ge 1)\le \Prob\left(|\{s\in [n/\omega]: X_s - X_{s-1} = d\}|\ge \frac{n}{2d\omega}\right) = o(1),$$
where the last equality follows from Markov's inequality and the fact that, on average, the random walk makes $O(n/\omega^2)$ steps $+\,d$ until step $n/\omega$.
\end{proof}

Now, define $G' = G[\cA_{\tau}\cup \cN_{\tau}]$.
The next two results tell us more about the structure of $G'$.

\begin{lemma}\label{lemma structural}
The graph $G'$ contains only vertices of degree $d-1$ and $d$. Moreover, the number of vertices of degree $d-1$ in $G'$ is at most $|\cA_{\tau}|+|\{v\in V(G): \deg_G(v) = d-1\}|$.
\end{lemma}
\begin{proof}
First of all, note that the vertices in $G'$ that do not have the same degrees in $G$ and in $G'$ are exactly $\cA_{\tau}$. Assuming the first statement of the lemma, this proves the second statement.


We show the first statement.
On the one hand, all vertices of degree at most $d-2$ in $G$ are contained in $\cP_{\tau}$.
On the other hand, no edge goes between $\cP_{\tau}$ and $\cN_{\tau}$ and, since $\cA'_{\tau} = \emptyset$ by assumption, all vertices in $\cA_{\tau}$ send at least $d-1$ edges towards $\cA_{\tau}\cup \cN_{\tau}$.
We conclude that all vertices in $\cA_{\tau}$ have degree $d-1$ in $G'$, which concludes the proof.
\end{proof}

\begin{corollary}\label{cor exp process}
A.a.s., at the end of the exploration process, the graph $G'$ contains at least $n-n/\omega$ vertices and at most $(d+1)n/\omega$ vertices of degree $d-1$ (in $G'$).
\end{corollary}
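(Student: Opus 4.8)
The plan is to read off the first statement from the process bookkeeping, and then to obtain the ``in particular'' from Lemma~\ref{lemma structural}. For the size of $H = G[\cA_s\cup \cP_s]$, I would track how $\cA_s$ and $\cP_s$ evolve. At each step the process moves exactly one vertex from $\cA$ to $\cP$, so if the process runs for $\tau$ steps then $|\cP_\tau| = \tau$; by Lemma~\ref{lemma exp process} we may condition on the a.a.s.\ event $\tau \le \omega^{-1}n$. Since $G$ has maximum degree $d$, processing a vertex $v$ adds at most $d$ vertices $N(v)\cap \cN_{s-1}$ to $\cA$, so $|\cA_\tau| \le |\cA_0| + d\tau$. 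Finally, $\cA_0 = \{v : \deg_G(v)\le d-2\}$ is contained in the set of vertices isolated in some matching, whence $|\cA_0| \le \omega^3(\log n)^{-1}n$ by Lemma~\ref{lemma non max deg}; using $1\ll \log\omega \ll \log\log n$ one has $\omega^4 = (\log n)^{o(1)} = o(\log n)$, so $|\cA_0| = o(\omega^{-1}n)$. Combining,
$$|V(H)| = |\cA_\tau| + |\cP_\tau| \le |\cA_0| + (d+1)\tau = O(\omega^{-1}n).$$

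For the second statement I would apply the bound from Lemma~\ref{lemma structural}, namely that the number of degree-$(d-1)$ vertices of $G' = G\setminus(H\setminus L(H))$ is at most $|L(H)| + |\{v\in G : \deg_G(v) = d-1\}|$. The first term satisfies $|L(H)| \le |V(H)| = O(\omega^{-1}n)$ by the previous paragraph. For the second term, a vertex of degree $d-1$ is isolated in exactly one of the $d$ matchings, so $\{v : \deg_G(v) = d-1\}$ is again contained in the set of vertices isolated in some matching, which has size at most $\omega^3(\log n)^{-1}n = o(\omega^{-1}n)$ by Lemma~\ref{lemma non max deg}. Adding the two bounds gives $O(\omega^{-1}n)$ degree-$(d-1)$ vertices in $G'$.

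Before invoking Lemma~\ref{lemma structural} I must check that its two hypotheses hold for the $H$ produced by the exploration process. The containment $V(H)\supseteq \{v : \deg_G(v)\le d-2\}$ is automatic, since these vertices are exactly $\cA_0$ and, as $\cA_s\cup\cP_s$ is nondecreasing in $s$, no vertex initially placed in $\cA_0$ ever leaves $\cA\cup\cP$. The remaining hypothesis, that every vertex of $L(H)$ has degree $d$ in $G$, is precisely what the stopping rule is engineered to provide: termination occurs only when $\cA'_s=\emptyset$, i.e.\ every still-active vertex has at least $d-1$ unmatched half-edges (equivalently at most one already-explored neighbour), which forces the low-degree leaves of $H$ to sit among the degree-$d$ vertices of $G$. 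Verifying this hypothesis cleanly is the one step that requires care, but it reduces to the deterministic termination condition rather than to any further probabilistic estimate; the only genuinely probabilistic inputs are the step bound of Lemma~\ref{lemma exp process} and the isolated-vertex count of Lemma~\ref{lemma non max deg}, both already available, so the remainder is bookkeeping.
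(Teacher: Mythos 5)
Your argument is essentially the paper's own proof with the bookkeeping made explicit: the first statement follows from Lemma~\ref{lemma exp process} together with the observations that each step moves one vertex from $\cA$ to $\cP$ and adds at most $d$ new vertices to $\cA$, and that $|\cA_0| \le \omega^3(\log n)^{-1}n = o(\omega^{-1}n)$; the second follows from Lemma~\ref{lemma structural}, $|L(H)|\le |V(H)|$ and Lemma~\ref{lemma non max deg}. All of this matches the published proof. The one place where you go beyond the paper is the attempted verification of the hypotheses of Lemma~\ref{lemma structural}, and there your reduction of the leaf condition to the stopping rule is incomplete: the termination criterion $\cA'_s=\emptyset$ only controls the unexplored boundary vertices in $\cA_s$ (which indeed have one matched half-edge and at least $d-1$ unmatched ones, hence degree $d$ in $G$). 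It says nothing about explored vertices: a vertex of degree $1$ in $G$ (one matched in only a single iteration) lies in $\cA_0$, always has at most $d-2$ unmatched half-edges, so it must be explored before termination and lands in $\cP_s$ with its unique neighbour inside $V(H)$; it therefore belongs to $L(H)$ while having degree $1\ne d$ in $G$. So the claim that the hypothesis ``is precisely what the stopping rule is engineered to provide'' overstates what the deterministic termination condition gives you. The paper does not address this verification either, so the point does not separate your argument from the one in the text, but it is the step you singled out as requiring care, and as sketched it does not yet close.
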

\begin{proof}
The first statement is implied by Lemma~\ref{lemma exp process} (under which $|\cP_{\tau}|\le n/\omega$). 
The second statement follows by combining Lemma~\ref{lemma structural}, the fact that a.a.s.\ $|\{v\in V(G): \deg(v)\le d-1\}| \le \omega^3 n/\log n \ll n/\omega$ by Lemma~\ref{lemma non max deg} and the fact that a.a.s.\ $|\cA_{\tau}|\le d|\cP_{\tau}|+|\cA_0| \le dn/\omega + o(n/\omega)$ by Lemmas~\ref{lemma non max deg} and~\ref{lemma exp process}.
\end{proof}

\noindent
\textbf{Stage 3: analysis of $G'$ and boosting to a $d$-connected graph.} ($(1+o(1))n\log n/2$ rounds, $o(n)$ accepted edges)\\

\noindent
At this last stage, we first find a (multi-)graph $G''$ obtained from $G'$ by deleting only a few edges with the additional property that Builder can construct $G''$ in the original process up to identifying multiple edges. 
Then, we show that a.a.s.\ Builder can boost $G''$ to a $d$-connected graph within $(1+o(1))n\log n/2$ rounds.

To begin with, recall that the exploration process from Stage 2 does not reveal any edge in $G'$. 
Therefore, the graph $G'$ is distributed as a random (multi-)graph constructed according to the coloured configuration model. 
The next classic lemma is a technical result saying that the typical local structure of $G'$ is tree-like.


\begin{lemma}\label{lemma double edges}
For every integer $\ell\ge 1$, a.a.s.\ the number of cycles of length $\ell$ in $G'$ is at most $\log n$. 
\end{lemma}
\begin{proof}
First of all, Corollary~\ref{cor exp process} implies that a.a.s.\ $\Delta(G') = d$ and $n-o(n)$ of the vertices in $G'$ have degree $d$. We condition on this event. 
Then, the expected number of $\ell$-cycles is bounded from above by $\tbinom{n}{\ell} \ell! d^{\ell} \left(1/(n - o(n))\right)^\ell \le 2d^\ell \ll \log n$ (where the factor $d^{\ell}$ comes from choosing the colours of the $d$ edges along the cycle) and the first statement follows from Markov's inequality. 
\end{proof}

Recall that one may couple the auxiliary process with the first (at most) $d n \log n/\omega$ rounds of the original process by simply omitting the rounds at which a repeated edge is proposed for a second, third, etc.\ time. 
We use this in combination with Lemma~\ref{lemma repeated}, which allows us to dominate the set of repeated edges in the auxiliary process by a random set including every edge of $G'$ independently with probability $p = \log n/n$. We say that edges belonging to the latter random set are \emph{marked} and denote by $G''$ the subgraph of $G'$ containing only the unmarked edges.
We note a minor point which is nevertheless a bit subtle: if at least one of several multiple edges between two verices in $G'$ is marked, each of the edges between these two endpoints cannot be used by Builder in the original process. 
However, conditionally on the a.a.s.\ event from Lemma~\ref{lemma double edges} with $\ell=2$, the probability of the above event is $O(p\cdot 2\log n) = o(1)$, so we ignore it.


Denote $n'' = |V(G'')|$. We recall that a.a.s.\ $n'' = n-o(n)$ and Corollary~\ref{cor exp process} shows that there are a.a.s.\ only $O(n/\omega)$ of these vertices of degree $d-1$. We condition on these events.

\begin{lemma}\label{lemma d ge 4}
For every $d\ge 4$, there are a.a.s.\ no sets $U\subseteq V(G'')$ of size more than $6$ such that $|N_{G''}(U)|\le d-1$. Moreover, the number of vertices of $G''$ in sets $U\subseteq V(G'')$ of size in $[6]$ that satisfy $|N_{G''}(U)|\le d-1$ is a.a.s.\ $o(n)$.
\end{lemma}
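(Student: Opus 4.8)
The plan is to run a first-moment computation in the coloured configuration model that produces $G''$, conditioning on the a.a.s.\ events already established: by Corollary~\ref{cor exp process} and Lemma~\ref{lemma non max deg} the graph $G''$ has maximum degree $d$ and only $|L|=O(\omega^{-1}n)=o(n)$ vertices of degree less than $d$, and by Lemma~\ref{lemma double edges} it has $O(\log n)$ multiple edges. First I would record that the percolation used to pass from $G'$ to $G''$ deletes only $O(\log n)$ edges and that a.a.s.\ no vertex loses two edges and no degree-$(d-1)$ vertex loses an edge, both by a short Markov estimate since each edge is deleted with probability $p=n^{-1}\log n$. Hence a.a.s.\ $G''$ has minimum degree exactly $d-1$, and I condition on this. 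Calling a set $U$ with $|N_{G''}(U)|\le d-1$ a \emph{bad set}, the key input is that, since the half-edges of a fixed vertex set are paired uniformly at random within colour classes, the probability that all edges incident to a given $U$ stay inside a given set $U\cup W$ behaves, up to constants depending only on $d$, like $\left(\tfrac{|U|+|W|}{n}\right)$ raised to the number of edges incident to $U$.

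For the first assertion I would split according to $u=|U|$, taking without loss of generality $u\le n/2$. For $7\le u\le \eps_0 n$ with a small constant $\eps_0$, I use the crude union bound over the choice of $U$ and of a set $W\supseteq N_{G''}(U)$ with $|W|=d-1$. Since $G''$ has minimum degree $d-1$, at least $(d-1)u/2$ edges are incident to $U$, and each must land inside $U\cup W$, which gives
\begin{equation*}
\esp\big[\#\{\text{bad }U:\,|U|=u\}\big]\;\le\;\binom{n}{u}\binom{n}{d-1}\left(\frac{Cd\,u}{n}\right)^{(d-1)u/2}
\end{equation*}
for a constant $C=C(d)$. For $d\ge 4$ the exponent of $n$ here is at most $3-u/2$, which is negative precisely when $u>6$, and the bound decays geometrically in $u$; summing over $7\le u\le \eps_0 n$ gives $o(1)$, and for $d\ge 5$ the threshold is even smaller. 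For the complementary range $\eps_0 n\le u\le n/2$ the crude bound is useless, and instead I invoke the expansion of the bounded-degree configuration model with minimum degree $\ge 3$: a standard first-moment estimate on sparse cuts shows that a.a.s.\ every vertex set of size in $[\eps_0 n,n/2]$ has edge-boundary of size $\Omega(n)$. Since $|N_{G''}(U)|\le d-1$ forces the edge-boundary of $U$ to have size at most $d(d-1)=O(1)$, no bad set of linear size can exist.

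For the second assertion I would bound the expected number of vertices lying in small bad sets and apply Markov's inequality, using $\big|\bigcup U\big|\le \sum_{U}|U|$. The singletons ($u=1$) are exactly the vertices with at most $d-1$ distinct neighbours, that is the degree-$(d-1)$ vertices together with the endpoints of multiple edges; by Lemma~\ref{lemma non max deg} and Lemma~\ref{lemma double edges} their number is $O(\omega^{-1}n)+O(\log n)=o(n)$, and it is exactly this contribution that forces the bound to be $o(n)$ rather than $o(1)$. For $2\le u\le 6$ I refine the first moment by recording the number $k$ of degree-$(d-1)$ vertices inside $U$: the number of choices of $U$ is at most $\binom{|L|}{k}\binom{n}{u-k}$, and optimising over the split of the edges incident to $U$ into $x$ internal edges and $y$ edges to $W$ (so $2x+y$ equals the degree sum of $U$ and $y\ge w:=|W|$) yields
\begin{equation*}
\esp\big[\#\{\text{bad }U:\,|U|=u\}\big]\;\le\;\max_{0\le k\le u,\;0\le w\le d-1}\;C^{u}\,\omega^{-k}\,n^{-u(d-2)/2+k/2+w/2},
\end{equation*}
which for $d\ge 4$ and $2\le u\le 6$ is at most $n^{1/2+o(1)}=o(n)$. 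Summing $u\cdot\esp[\#\{\text{bad }U:|U|=u\}]$ over $2\le u\le 6$ and adding the singleton contribution gives $o(n)$ in expectation, and Markov's inequality finishes the proof.

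I expect the main obstacle to be the correct bookkeeping of the configuration-model matching probabilities that makes the threshold come out at exactly $6$ in the extremal case $d=4$: one must track both the internal edges forced inside $U$ and the edges into the cut $W$, and resist the temptation to pair the dominant matching pattern with the largest admissible $W$, which overcounts. A secondary difficulty is that the low-degree vertices behave qualitatively differently from the degree-$d$ vertices — they are automatically bad as singletons and are the reason the second statement is only $o(n)$ — so they must be isolated and counted through $|L|=O(\omega^{-1}n)$ rather than through the generic edge-concentration estimate.
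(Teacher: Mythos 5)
Your overall strategy for the first assertion is the paper's: a first-moment bound over the choice of $U$ and of a $(d-1)$-set $W\supseteq N_{G''}(U)$, using that at least $(d-1)|U|/2$ edges meet $U$ and must land in $U\cup W$, with exactly the same exponent arithmetic $u+(d-1)-(d-1)u/2$ producing the threshold $u>6$ at $d=4$, and a separate treatment of linear-sized sets (the paper does the Stirling computation explicitly rather than quoting expansion of min-degree-$3$ configuration models, but the content is the same). However, there is one genuine error in your setup: the claim that a.a.s.\ no degree-$(d-1)$ vertex loses an edge under the percolation, hence that $G''$ a.a.s.\ has minimum degree $d-1$. By Corollary~\ref{cor exp process} the number of degree-$(d-1)$ vertices of $G'$ is only bounded by $O(\omega^{-1}n)$, so the expected number of them losing an edge is of order $\omega^{-1}n\cdot p=\omega^{-1}\log n$; since $\log\omega\ll\log\log n$ forces $\omega=(\log n)^{o(1)}$, this tends to \emph{infinity}, and your Markov estimate does not go through. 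You therefore cannot condition on $\delta(G'')=d-1$, and the inequality ``at least $(d-1)u/2$ edges are incident to $U$'' is unjustified for sets containing vertices that dropped to degree $d-2$ --- a real concern since at $d=4$ your exponent is tight. The paper sidesteps this by working with $G'$ (which genuinely has minimum degree $d-1$) and folding the deletion into the per-edge probability: each half-edge of $U$ either pairs inside $U\cup W$ or its edge is percolated, giving the ``$+p$'' inside the product in~\eqref{eq split}. Alternatively, you could note that a.a.s.\ only $O(\omega^{-1}(\log n)^2)$ vertices drop to degree $d-2$ and carry them as a third, much rarer, vertex class in your union bound; either repair leaves the rest of your computation intact.

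For the second assertion your route genuinely differs from the paper's. The paper argues via a local-limit statement --- all but $o(n)$ vertices have $11$-th neighbourhood isomorphic to a truncated $d$-regular tree, and no such vertex can lie in a set $U$ with $|U|\le 6$ and $|N_{G''}(U)|\le d-1$ --- whereas you run a refined first moment over $2\le u\le 6$, stratifying by the number $k$ of low-degree vertices in $U$ (counted through $|L|=O(\omega^{-1}n)$) and by the split of incident edges into internal and boundary ones. Your bookkeeping $n^{-u(d-2)/2+k/2+w/2}$ checks out (for $d=4$, $u=2$ it peaks around $n^{1/2+o(1)}$, which is why the conclusion is $o(n)$ rather than $o(1)$ once the automatically-bad singletons are added), and your identification of the singleton contribution as the dominant $o(n)$ term is correct. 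This version is more elementary and more quantitative than the paper's tree-neighbourhood argument, at the price of the degree bookkeeping --- which is exactly where the percolation slip above must be repaired before the stratified count is legitimate.
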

\begin{proof}
Condition on the a.a.s.\ events (ensured by Corollary~\ref{cor exp process}) that $n''\ge n-n/\omega$ and that there are at most $(d+1)n/\omega$ vertices of degree $d-1$ in $G'$. We divide the proof into two cases according to the size of $U$.

First, we fix a sufficiently small $\eps > 0$, $s\in [\eps n, n''/2]$, a vertex set $U\subseteq V(G'')$ of size $s$ and a vertex set $W\subseteq V(G'')\setminus U$ of size at most $(\log n)^2$.
We show that, for any choice of $U$ and $W$ as above, $N_{G'}(U)\not\subseteq W$.
To begin with, there are at least $ds - (d+1)n/\omega$ half-edges sticking out of $U$ in the coloured configuration model generating the graph $G'$.
To have that $N_{G'}(U)\subseteq W$, each of these half-edges has to connect with another half-edge sticking out of $U\cup W$ in the same colour.
Moreover, after $j-1$ of the edges in a given colour sticking out of $U$ have been formed, at least $n'' - 2(j-1) - (d+1)n/\omega\ge n-2(j-1)-(d+2)n/\omega$ half-edges in that same colour remain unmatched.
Letting $s_1, \ldots, s_d$ be the number of vertices in $U$ having a half-edge in colour respectively $1,\ldots,d$ sticking out of them, the probability that $N_{G''}(U)\subseteq W$ is at most
\[\prod_{i=1}^d \prod_{j=1}^{\lceil s_i/2\rceil} \left(\frac{s_i + (\log n)^2 - 2(j-1)}{n-2(j-1)-(d+2)n/\omega}\right) = \mathrm{e}^{o(n)}
\prod_{i=1}^d \binom{\lceil n/2-(d+2)n/2\omega\rceil}{\lceil s_i/2+(\log n)^2/2\rceil}^{-1} = \mathrm{e}^{o(n)} \binom{\lceil n/2\rceil}{\lceil s/2\rceil}^{-d},\]
where we used that $s_i = s - o(n)$ for all $i\in [d]$.
A union bound over $s\in [\eps n, n''/2]$, the sets $U$ of size $s$ and the sets $W$ of size $\ell\le (\log n)^2$ gives that the probability that $N_{G'}(U)\subseteq W$ is bounded from above by
\begin{equation}\label{eq:unionb}
\sum_{s=\eps n}^{n''} \sum_{\ell=1}^{(\log n)^2} \binom{n}{s} \binom{n-s}{\ell} \mathrm{e}^{o(n)} \binom{\lceil n/2\rceil}{\lceil s/2\rceil}^{-d}.    
\end{equation}
However, for every $s\in [\eps n, n''/2]$, we have that $\tbinom{n-s}{\ell} = \mathrm{e}^{o(n)}$ and
\[\binom{n}{s} \binom{\lceil n/2\rceil}{\lceil s/2\rceil}^{-d} = \mathrm{e}^{o(n)} \exp\left(s\log\left(\frac{n}{s}\right)+(n-s)\log\left(\frac{n}{n-s}\right) - \frac{ds}{2}\log\left(\frac{n}{s}\right) - \frac{d(n-s)}{2}\log\left(\frac{n}{n-s}\right)\right).\]
Since $d\ge 3$, there is a constant $c = c(\eps) > 0$ such that, for every $s\in [\eps n, n''/2]$, the above expression is bounded from above by $\mathrm{e}^{-cn+o(n)}$, which is sufficient to conclude that the expression in~\eqref{eq:unionb} tends to zero.
In particular, this means that a.a.s.\ every set $U$ of size $s\in [\eps n, n''/2]$ has neighbourhood of size at least $(\log n)^2$.
Moreover, Markov's inequality implies that a.a.s.\ there are at most $(\log n)^2/2$ marked edges, so every set $U$ as above has $\Omega((\log n)^2) > d-1$ neighbours in $G''$.


Now, fix $s\in [7, \eps n]$, a vertex set $U\subseteq V(G'')$ of size $s$ and a vertex set $W\subseteq V(G'')\setminus U$ of size at most $d-1$. 
Then, there are at least $(d-1)s$ half-edges sticking out of $U$ in the coloured configuration model generating the graph $G'$.
To have that $W = N_{G''}(U)$, each of these half-edges either has to connect with another half-edge sticking out of $U\cup W$ in the same colour or participate in a marked edge.
Moreover, after $i-1$ of the half-edges in a given colour sticking out of $U$ have been matched, at least $n'' - 2(i-1) - (d+1)n/\omega\ge n-2(i-1)-(d+2)n/\omega$ half-edges in that same colour remain unmatched.
With $s_1, \ldots, s_d$ as above, the probability that $N_{G''}(U) = W$ is at most
\[\prod_{i=1}^d \prod_{j=1}^{\lceil s_i/2\rceil} \left(\frac{s_i + (d-1) - (j-1)}{n-2(j-1)-(d+2)n/\omega} + p\right).\]
Moreover, one can easily check that, for all $i\in [d]$, $j\in [\lceil s_i/2\rceil]$ and large $n$,
\[\frac{s_i+(d-1)-(j-1)}{n-2(j-1)-(d+2)n/\omega}+p\le \frac{2(s+\log n)}{n}.\]
Using a union bound over all possible choices for $U$ and $W$ (including $\ell = |W|$) and the fact that $\lceil s_1/2\rceil + \ldots + \lceil s_d/2\rceil \ge (d-1)s/2$, the probability that such sets with $N_{G''}(U) = W$ exist is at most
\begin{equation}\label{eq split}
\sum_{\ell = 0}^{d-1} \binom{n''}{s}\binom{n''-s}{\ell} \left(\frac{2(s + \log n)}{n}\right)^{(d-1)s/2} \le \binom{n}{s} n^{d-1} \left(\frac{2(s + \log n)}{n}\right)^{(d-1)s/2}.
\end{equation}
Moreover, for every $s\in [7, n''/2-1]$, 
\begin{align*}
\frac{\tbinom{n}{s} n^{d-1} \left(\tfrac{2(s + \log n)}{n}\right)^{(d-1)s/2}}{\tbinom{n}{s+1} n^{d-1} \left(\tfrac{2(s + 1 + \log n)}{n}\right)^{(d-1)(s+1)/2}} 
&=\; \frac{s+1}{n-s}\cdot \left(\frac{n}{2(s + 1 + \log n)}\right)^{(d-1)/2}\cdot \left(\frac{s + \log n}{s + 1 + \log n}\right)^{(d-1)s/2}.
\end{align*}
Moreover, as $d\ge 4$, the above expression is larger than 1 and uniformly bounded away from 1 over the interval $s\in [7, \eps n]$ for some sufficiently small but fixed $\eps > 0$.
As a result, the sum of the right hand side of~\eqref{eq split} over $s\in [7, \eps n]$ is of the same order as the term for $s=7$. 
Since $d\ge 4$, the latter is bounded by
\[\binom{n}{7} n^{d-1} \left(\frac{2(7 + \log n)}{n}\right)^{7(d-1)/2} = n^{d+6-7(d-1)/2-o(1)} = o(1).\]

Finally, we recall that there are $o(n)$ vertices of degree less than $d$ in $G'$. 
Moreover, a.a.s.\ there are at most $(\log n)^2 = o(n)$ marked edges, and $O(\log n) = o(n)$ vertices in cycles of length at most 14 by Lemma~\ref{lemma double edges}. 
In particular, a.a.s.\ the 7-th neighbourhood of all but $o(n)$ of the vertices in $G''$ is isomorphic to the 7-th neighbourhood of any vertex in a $d$-regular tree.
We condition on this event and show that none of these vertices is in a set $U$ of size $s\in [6]$ satisfying $|N_{G''}(U)|\le d-1$. 
Fix such a vertex $v$ and let $U'\subseteq U$ be the connected component of $v$ in $G''[U]$. 
Then, $U'$ has neighbourhood of size at least $d$: indeed, the vertex $v$ itself has $d$ neighbours and adding the vertices in $U'$ one by one so that the set of all added vertices remains connected at every step shows that the size of the neighbourhood cannot decrease, which completes the proof.
\end{proof}

To conclude the proof in the case $d\ge 4$, we condition on the a.a.s.\ event from Lemma~\ref{lemma double edges} and the graph $G''$ satisfying the a.a.s.\ statement of Lemma~\ref{lemma d ge 4}. 
As already pointed out, by the coupling in Lemma~\ref{lemma repeated}, Builder has a strategy to construct $G''$ in $o(n\log n)$ rounds up to identifying multiple edges (of which there are at most $\log n$ pairs). 
At the same time, connecting every vertex $v\in V(G'')$ in a set $U$ with $|U|\le 6$ and $|N_{G''}(U)|\le d-1$ to $d+5$ new neighbours in $G''$ is sufficient to transform $G''$ into a $d$-connected graph. 
Indeed, for every such vertex $v$ and every set $U$ as above that contains it, the size of the neighbourhood of $U$ in this larger graph would be at least $(d+5) - (|U|-1)\ge (d+5) - 5 = d$. 
Using that there are only $o(n)$ such vertices, and there are a.a.s.\ $o(n)$ vertices in $G\setminus G'$ as well, combining Lemma~\ref{lemma eq union bound} (applied for $k = d+5$, $\mu = 4/\omega\ge 2|V(G\setminus G')|/n$ and the sets $S_i = V(G'')\setminus \{w_i\}$ for all $i\in [n]$) with Lemma~\ref{lem last step} finishes the proof in the case $d\ge 4$.

Before continuing with the proof for $d\in \{2,3\}$, note that there is a natural reason for the above approach to fail in this case. Indeed, when $d=2$, a configuration model with vertices of degrees 1 and 2 only is a union of paths and thus has plenty of connected components. Moreover, when $d=3$, the presence of many vertices of degree 2 ensures long paths, which in turn implies that there will a.a.s.\ exist sets $U$ of size $|U|\gg 1$ satisfying $|N_{G''}(U)| = 2$.

\section{\texorpdfstring{Proof for the case $d\in \{2,3\}$}{}}

As mentioned in the outline of the proof, we divide the construction into three major stages. At the first stage, we build many long vertex-disjoint paths. At the second stage, we merge almost all paths constructed up to now into a single long cycle. For $d = 2$, the third stage will rely on Lemmas~\ref{lemma eq union bound} and~\ref{lem last step}, while for $d=3$, a bit of additional work is needed .\\

\noindent
\textbf{Stage 1: building long paths.} ($o(n\log n)$ rounds, $n+o(n)$ accepted edges for $d=2$, $\tfrac{3}{4}n+o(n)$ accepted edges for $d=3$)\\

\noindent
In this stage, we use $o(n\log n)$ rounds to construct a set of paths of total length $n-o(n)$ for $d=2$, and $\tfrac{3}{4}n - o(n)$ for $d=3$. 
The proof is given for $d=2$; we point out a single minor modification in the case $d=3$ along the way.

Fix $t_1 = \omega^5 n$ and $N = n/2\omega^3$. Starting from $N$ vertices, we iteratively construct $N$ disjoint paths. 
Initially, each of these paths consists of one vertex. 
Then, at every round, if the proposed edge connects a vertex outside the $N$ paths to the last added vertex in some of the paths, Builder accepts the edge, otherwise they ignore it, see Figure~\ref{fig 1}.

\begin{figure}[ht]
\centering
\begin{tikzpicture}[line cap=round,line join=round,x=1cm,y=1cm]
\clip(-11.634891094552783,0.33897688139665544) rectangle (2.8505467623554837,7.430015832291529);
\draw [line width=0.1pt] (-10,4)-- (0.5,4);
\draw [line width=0.1pt] (-4.75,-20)-- (-4.75,20);

\draw [line width=0.8pt] (-2.5,5)-- (-2.5,6);
\draw [line width=0.8pt, opacity = 0.2] (-0.5,6)-- (-0.5,5);
\draw [line width=0.8pt] (-8,3)-- (-8,2);
\draw [line width=0.8pt] (-6,3)-- (-6,2);
\draw [line width=0.8pt] (-2.5,3)-- (-2.5,2);
\draw [line width=0.8pt] (-0.5,3)-- (-0.5,2);
\draw [line width=0.8pt, opacity = 0.2] (-8,3)-- (-7.5,2);
\draw [line width=0.8pt, opacity = 0.2] (-2.5,1)-- (-1.5,1);
\draw [line width=0.8pt, opacity = 0.2] (-8,5)-- (-8,6);
\begin{scriptsize}
\draw [fill=black] (-9,6) circle (1.5pt);
\draw [fill=black] (-8,6) circle (1.5pt);
\draw [fill=black] (-6.8,6) circle (0.5pt);
\draw [fill=black] (-7.2,6) circle (0.5pt);
\draw [fill=black] (-7,6) circle (0.5pt);
\draw [fill=black] (-6,6) circle (1.5pt);
\draw [fill=black] (-8,5) circle (1.5pt);
\draw [fill=black] (-9,6) circle (1.5pt);
\draw [fill=black] (-8,6) circle (1.5pt);
\draw [fill=black] (-6,6) circle (1.5pt);
\draw [fill=black] (-8,5) circle (1.5pt);
\draw [fill=black] (-9,6) circle (1.5pt);
\draw [fill=black] (-8,6) circle (1.5pt);
\draw [fill=black] (-6.8,6) circle (0.5pt);
\draw [fill=black] (-7.2,6) circle (0.5pt);
\draw [fill=black] (-7,6) circle (0.5pt);
\draw [fill=black] (-6,6) circle (1.5pt);
\draw [fill=black] (-8,5) circle (1.5pt);
\draw [fill=black] (-3.5,6) circle (1.5pt);
\draw [fill=black] (-2.5,6) circle (1.5pt);
\draw [fill=black] (-2.5,5) circle (1.5pt);
\draw [fill=black] (-0.5,6) circle (1.5pt);
\draw [fill=black] (-9,3) circle (1.5pt);
\draw [fill=black] (-8,3) circle (1.5pt);
\draw [fill=black] (-6,3) circle (1.5pt);
\draw [fill=black] (-3.5,3) circle (1.5pt);
\draw [fill=black] (-2.5,3) circle (1.5pt);
\draw [fill=black] (-0.5,3) circle (1.5pt);
\draw [fill=black] (-0.5,5) circle (1.5pt);
\draw [fill=black] (-8,2) circle (1.5pt);
\draw [fill=black] (-6,2) circle (1.5pt);
\draw [fill=black] (-2.5,2) circle (1.5pt);
\draw [fill=black] (-2.5,1) circle (1.5pt);
\draw [fill=black] (-0.5,2) circle (1.5pt);
\draw [fill=black] (-7.5,2) circle (1.5pt);
\draw [fill=black] (-1.5,6) circle (0.5pt);
\draw [fill=black] (-1.3,6) circle (0.5pt);
\draw [fill=black] (-1.7,6) circle (0.5pt);
\draw [fill=black] (-1.5,3) circle (0.5pt);
\draw [fill=black] (-1.3,3) circle (0.5pt);
\draw [fill=black] (-1.7,3) circle (0.5pt);
\draw [fill=black] (-7,3) circle (0.5pt);
\draw [fill=black] (-6.8,3) circle (0.5pt);
\draw [fill=black] (-7.2,3) circle (0.5pt);
\draw [fill=black] (-1.5,1) circle (1.5pt);

\draw [fill=black] (-7.5,6.5) node {\huge{1}};
\draw [fill=black] (-2,6.5) node {\huge{2}};
\draw [fill=black] (-2,3.5) node {\huge{4}};
\draw [fill=black] (-7.5,3.5) node {\huge{3}};
\end{scriptsize}
\end{tikzpicture}
\caption{The figure represents the first 4 rounds of the process. The initial $N$ vertices are ordered horizontally just below the round number. The transparent edges are the ones proposed at the given round. Thus, Builder accepts the first two proposed edges, ignores the third one since it is not incident to the last vertex that entered the first path, and ignores the fourth one since it is not incident to any of the vertices that are currently in the $N$ paths.}
\label{fig 1}
\end{figure}
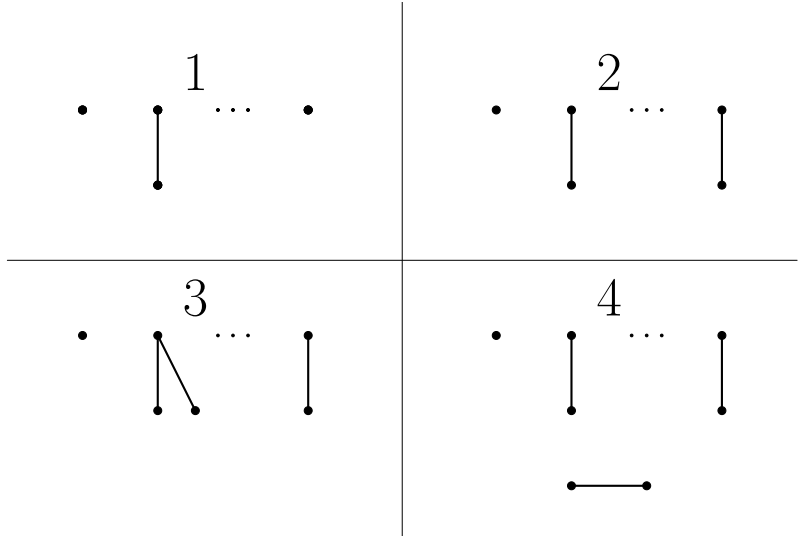

\begin{lemma}\label{claim 1}
After round $t_1$, the $N$ paths a.a.s. contain at least $n-n/\omega$ vertices. Moreover, at this point a.a.s. at least $(1-2/\omega) n$ of these vertices are in paths of length between $\omega^3$ and $3\omega^3$.
\end{lemma}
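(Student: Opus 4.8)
Throughout, denote by $U_s$ the number of vertices covered by none of the paths after $s$ rounds, so $U_0 = n-N$ and $(U_s)_{s\ge 0}$ is non-increasing with $U_{s-1}-U_s = \ind[\text{Builder accepts at round } s]$. At every round each of the $N$ paths has a single active endpoint (its last added vertex), and I call an edge \emph{useful at round $s$} if it joins one of these $N$ endpoints to one of the $U_s$ uncovered vertices; there are precisely $N U_s$ useful edges in $K_n$. Since $t_1 = \omega^5 n = o(n\log n)$, the graph of all edges proposed up to round $t_1$ has maximum degree at most $3\log n$ a.a.s.\ by the computation in~\eqref{eq chernoff}, and I condition on this event. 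A useful edge that was already proposed before round $s$ is necessarily incident to an endpoint that was itself uncovered at the time of that proposal, so at most $3N\log n$ useful edges are unavailable at round $s$. As $\binom n2 - s = (1-o(1))\binom n2$ for all $s\le t_1$, the conditional probability that Builder accepts at round $s$ given the history is $(1+o(1))\tfrac{2NU_s}{n^2}$ whenever $U_s\ge \omega^{-1}n$.

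For the first statement, let $\tau = \min\{s: U_s < \omega^{-1}n\}$. For every $s\le \min(\tau,t_1)$ the acceptance probability computed above is at least $(1-o(1))\tfrac{2N\omega^{-1}n}{n^2} = (1-o(1))\omega^{-4}$, so I can couple the acceptance indicators with i.i.d.\ $\mathrm{Bernoulli}((1-o(1))\omega^{-4})$ variables dominated by them up to time $\tau$. On the event $\{\tau>t_1\}$ the number of accepted rounds among the first $t_1$ then dominates a $\mathrm{Bin}(t_1,(1-o(1))\omega^{-4})$ variable, whose mean is $(1-o(1))\omega n$; by Chernoff's inequality (Lemma~\ref{lemma chern}) it is a.a.s.\ at least $\tfrac12\omega n$. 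But the number of accepted rounds equals $U_0-U_{t_1}\le n$, which is smaller than $\tfrac12\omega n$ for large $n$ — a contradiction. Hence a.a.s.\ $\tau\le t_1$, so $U_{t_1}<\omega^{-1}n$ and the $N$ paths cover at least $(1-\omega^{-1})n$ vertices.

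For the second statement, note that the average number of vertices per path is $(n-U_{t_1})/N = (1+o(1))\,2\omega^3$, which sits at the centre of $[\omega^3,3\omega^3]$; I must show that only $o(n)$ vertices lie in paths whose length is far from this average. The plan is to view the covering as an allocation problem: each accepted round absorbs one uncovered vertex into one path, and conditionally on the trajectory $(U_s)$ and on the set of accepted rounds, the extended path is — up to the $O(\log n/U_s)=o(1)$ correction coming from already proposed edges — uniform among the $N$ paths, with no reinforcement (extending a path leaves it with a single endpoint and $\approx U_s$ useful edges, so it is no more likely to grow again). Thus the lengths behave like the occupancies of $N$ bins into which $(1+o(1))n$ balls are thrown uniformly, and the length of a fixed path is approximately $\mathrm{Poisson}(2\omega^3)$.

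Since a $\mathrm{Poisson}(2\omega^3)$ variable leaves $[\omega^3,3\omega^3]$ only with probability $e^{-\Theta(\omega^3)}$, summing $\esp[\,\mathrm{len}_j\,\ind[\mathrm{len}_j\notin[\omega^3,3\omega^3]]\,]$ over the $N$ paths gives that the expected number of vertices in such paths is at most $n\,e^{-\Theta(\omega^3)} = o(\omega^{-1}n)$; Markov's inequality then bounds this number by $\omega^{-1}n$ a.a.s., and together with $U_{t_1}<\omega^{-1}n$ this yields the claimed $(1-2\omega^{-1})n$ vertices in paths of length in $[\omega^3,3\omega^3]$. The main obstacle is exactly this last concentration step: because $\omega^3\ll \log n$, the per-path failure probability $e^{-\Theta(\omega^3)}$ is far larger than $1/N=\Theta(\omega^3/n)$, so one cannot union bound over paths and is forced into an aggregate first-moment argument — which in turn requires a genuine decoupling of a single path's growth from the random shrinking of the uncovered set. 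The delicate regime is the late rounds where $U_s$ is small and the uniform-allocation approximation degrades, but at most $o(\omega^{-1}n)$ extensions occur while, say, $U_s<\omega^{-2}n$, so the vertices they absorb can be discarded into the error term without affecting the conclusion.
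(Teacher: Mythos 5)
Your proof is correct and follows essentially the same route as the paper's: the coverage claim via stochastic domination of the acceptance indicators by Bernoulli$(\Theta(\omega^{-4}))$ variables plus Chernoff (the paper bounds $\Prob(T>t_1)\le\Prob(\mathrm{Bin}(t_1,\omega^{-4})\le n)$ directly rather than by contradiction), and the typicality claim via the observation that each augmentation lands on an (essentially) uniform path, so path lengths are approximately $\mathrm{Bin}((1-o(1))n,N^{-1})$ with $e^{-\Omega(\omega^3)}$ tails, followed by a first-moment bound and Markov. Your treatment is in fact slightly more careful than the paper's about the already-proposed-edge correction and the late rounds where the uncovered set is small.
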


Note that, when $d=3$, instead of reaching round $t_1$ and then stop extending the paths, we stop when the total number of vertices in the $N$ paths reaches $\tfrac{3}{4}n - 1$. By Lemma~\ref{claim 1} a.a.s.\ this moment comes before round $t_1$.

\begin{proof}[Proof of Lemma~\ref{claim 1}]
Let $T$ denote the round when there remain exactly $n/\omega$ vertices outside the $N$ paths. Note that, at every round $s\in [0,T-1]$, the probability that one of the $N$ paths is extended is bounded from below by 
$$\left(\frac{n}{2\omega^3}\right) \left(\frac{n}{\omega}\right)\bigg{/}\binom{n}{2} \ge \frac{1}{\omega^4}.$$
Hence, the number of vertices $X_s$ in the $N$ paths after round $s\le T-1$ stochastically dominates the sum of $s$ independent Bernoulli random variables with success probability $1/\omega^4$. By the above comparison and Chernoff's inequality, 
$$\Prob(T > t_1) = \Prob(X_{t_1}\le n-n/\omega) \le \Prob(\mathrm{Bin}(t_1, 1/\omega^4)\le n) = o(1),$$
which proves the first statement.

Let us condition on the event $\{T \le t_1\}$. Denote by $Y_1$ the number of vertices in paths of length at most $\omega^3$, and by $Y_2$ the number of vertices in paths of length at least $3\omega^3$. We will show that $\mathbb E[Y_1+Y_2] = o(n/\omega)$, and then conclude by Markov's inequality.

Note that, at every round when some path is augmented, each of the $N$ paths has equal probability to be the augmented one. 
Thus, after $n-o(n)$ vertices have been included in the $N$ paths, the length of every path is a binomial random variable with parameters $n-o(n)$ and $1/N$. 
Since on average every path contains $(1-o(1))n/N = (2-o(1))\omega^3$ vertices after $t_1$ rounds, Chernoff's inequality again implies that one path contains at most $\omega^3$ vertices with probability $\exp(-\Omega(\omega^3))\le 1/\omega^3$. 
Hence, $\mathbb E Y_1\le \omega^3 N/\omega^3 = N = o(n/\omega)$.

Moreover, for every $i\ge 0$, Chernoff's inequality provides also that
$$\Prob(\mathrm{Bin}(n-n/\omega, 1/N)\ge 3\omega^3+i)\le \exp(-\Omega(\omega^3+i)),$$
so we conclude that
$$\mathbb E Y_2 = N \sum_{i\ge 0} (3\omega^3 + i)\Prob(\mathrm{Bin}(n-n/\omega, 1/N)\ge 3\omega^3+i)\le N \sum_{i\ge 0} (3\omega^3 + i) \exp(-\Omega(\omega^3+i)) = o(n/\omega).$$
Thus, using $\mathbb E [Y_1+Y_2] = o(n/\omega)$ and Markov's inequality concludes the proof of the second statement.
\end{proof}

\noindent
\textbf{Stage 2: merging the paths.} ($o(n\log n)$ rounds, $o(n)$ accepted edges)\\

\noindent
The reasoning in this stage is given for $d=2$. The adaptation to the case $d=3$ is immediate and requires no additional arguments.

To begin with, condition on the a.a.s.\ events from Lemma~\ref{claim 1} and let $M$ be the number of paths of length between $\omega^3$ and $3\omega^3$ after $t_1$ rounds. We call these paths \emph{typical}. At the second stage, we introduce an auxiliary directed graph constructed as follows.

Define $2M$ sets $(S'_i)_{i=1}^{M}$ and $(S''_i)_{i=1}^{M}$ where $S'_i$ contains the first $\omega^2$ vertices and $S''_i$ contains the last $\omega^2$ vertices in the $i$-th typical path. Also, define $t_2 = t_1 + \omega^3 n$ and let Builder accept the edge $e$ at round $s\in [t_1+1, t_2]$ if there are $i,j\in [M]$ such that $e$ connects a vertex in $S'_i$ to a vertex in $S''_j$. Now, the auxiliary directed graph $H$ with vertex set $[M]$ is constructed iteratively by adding the edge $ij$ if there is a round $s\in [t_1+1, t_2]$ at which Builder constructs an edge $S'_i$ and $S''_j$, see Figure~\ref{fig 2}. 
Note that, for two vertices $i,j$ in $H$, the edges $ij$ and $ji$ can participate simultaneously in $H$.

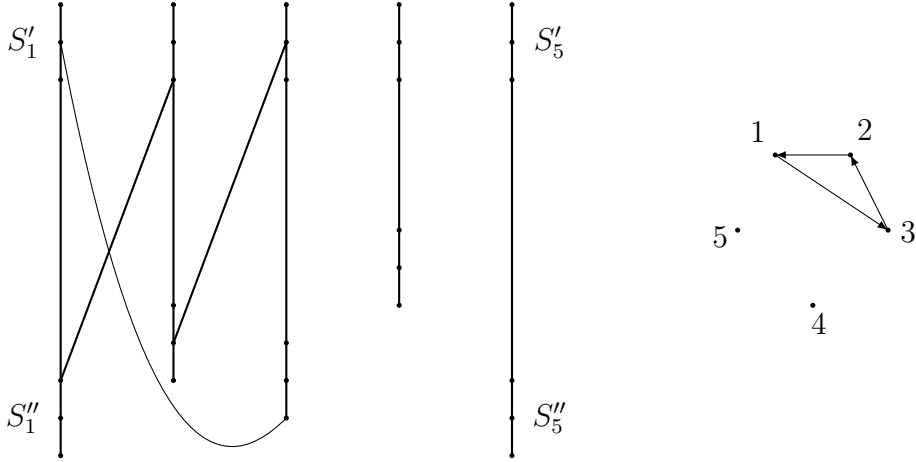
\begin{figure}
\centering
\begin{tikzpicture}[scale = 0.5,line cap=round,line join=round,x=1cm,y=1cm]
\clip(-10.14,-8.05) rectangle (20.42,6.91);
\draw [line width=0.8pt] (-5,6)-- (-5,-6);
\draw [line width=0.8pt] (-2,6)-- (-2,-4);
\draw [line width=0.8pt] (1,6)-- (1,-5);
\draw [line width=0.8pt] (4,-2)-- (4,6);
\draw [line width=0.8pt] (7,6)-- (7,-6);
\draw [line width=0.8pt] (1,5)-- (-2,-3);
\draw [line width=0.8pt] (-2,4)-- (-5,-4);
\draw [-latex] (17,0) -- (16,2);
\draw [-latex] (14,2) -- (17,0);
\draw [-latex] (16,2) -- (14,2);
\begin{scriptsize}
\draw [fill=black] (-5,6) circle (1.5pt);
\draw [fill=black] (-5,-6) circle (1.5pt);
\draw [fill=black] (-2,6) circle (1.5pt);
\draw [fill=black] (-2,-4) circle (1.5pt);
\draw [fill=black] (1,6) circle (1.5pt);
\draw [fill=black] (1,-5) circle (1.5pt);
\draw [fill=black] (4,-2) circle (1.5pt);
\draw [fill=black] (4,6) circle (1.5pt);
\draw [fill=black] (7,6) circle (1.5pt);
\draw [fill=black] (7,-6) circle (1.5pt);
\draw [fill=black] (-2,5) circle (1.5pt);
\draw [fill=black] (1,5) circle (1.5pt);
\draw [fill=black] (4,5) circle (1.5pt);
\draw [fill=black] (7,5) circle (1.5pt);
\draw [fill=black] (-5,5) circle (1.5pt);
\draw [fill=black] (-5,4) circle (1.5pt);
\draw [fill=black] (-2,4) circle (1.5pt);
\draw [fill=black] (1,4) circle (1.5pt);
\draw [fill=black] (4,4) circle (1.5pt);
\draw [fill=black] (7,4) circle (1.5pt);
\draw [fill=black] (-5,-4) circle (1.5pt);
\draw [fill=black] (-5,-5) circle (1.5pt);
\draw [fill=black] (-2,-2) circle (1.5pt);
\draw [fill=black] (-2,-3) circle (1.5pt);
\draw [fill=black] (1,-3) circle (1.5pt);
\draw [fill=black] (1,-4) circle (1.5pt);
\draw [fill=black] (4,0) circle (1.5pt);
\draw [fill=black] (4,-1) circle (1.5pt);
\draw [fill=black] (7,-4) circle (1.5pt);
\draw [fill=black] (7,-5) circle (1.5pt);
\draw [fill=black] (14,2) circle (1.5pt);
\draw [fill=black] (16,2) circle (1.5pt);
\draw [fill=black] (17,0) circle (1.5pt);
\draw [fill=black] (15,-2) circle (1.5pt);
\draw [fill=black] (13,0) circle (1.5pt);

\draw [fill=black] (-6,5) node {\large{$S'_1$}};
\draw [fill=black] (-6,-5) node {\large{$S''_1$}};

\draw [fill=black] (8,5) node {\large{$S'_5$}};
\draw [fill=black] (8,-5) node {\large{$S''_5$}};

\draw [fill=black] (13.54,2.63) node {\large{$1$}};
\draw [fill=black] (16.38,2.67) node {\large{$2$}};
\draw [fill=black] (17.54,-0.05) node {\large{$3$}};
\draw [fill=black] (15.16,-2.5) node {\large{$4$}};
\draw [fill=black] (12.54,-0.19) node {\large{$5$}};

\draw (-5,5) .. controls (-3,-5.5) and (-1,-7) .. (1,-5);
\end{scriptsize}
\end{tikzpicture}
\caption{The figure depicts the first 5 typical paths (on the left) as well as the graph, induced from $H$ by the 5 corresponding vertices (on the right). The sets $S'_1,\ldots, S'_5$ are depicted at the top while the sets $S''_1,\ldots, S''_5$ are put at the bottom. Then, the three edges between $S'_1$ and $S''_3$, $S'_3$ and $S''_2$, and $S'_2$ and $S''_1$ result in the edges $13$, $32$ and $21$ in $H$.}
\label{fig 2}
\end{figure}

\begin{lemma}\label{lemma long cycle}
After round $t_2$, the graph $H$ a.a.s.\ contains a directed cycle of length at least $M-M/\omega$.
\end{lemma}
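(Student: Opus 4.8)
The plan is to recognise the auxiliary digraph $H$ as a supergraph of a random directed graph with diverging average degree, and then to extract the cycle by a depth-first search that builds an almost-spanning directed path, closed up with a little fresh randomness.

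\textbf{Step 1: the law of $H$.} Conditioning on the a.a.s.\ events of Lemma~\ref{claim 1}, the typical paths cover $(1-o(1))n$ vertices and each has length in $[\omega^3,3\omega^3]$, so the number of typical paths satisfies $M=\Theta(\omega^{-3}n)\to\infty$. Since distinct typical paths are vertex-disjoint and each has length at least $\omega^3\gg 2\omega^2$, the $2M$ sets $S'_i,S''_i$ are pairwise disjoint; hence every edge of $K_n$ joins at most one pair $(S'_i,S''_j)$, and for distinct ordered pairs $(i,j)$ the families of $K_n$-edges responsible for the arc $ij$ of $H$ are disjoint, each of size $|S'_i|\,|S''_j|=\omega^4$. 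Applying Lemma~\ref{lemma comp} to the ground set $E(K_n)$ over the $\omega^3 n$ rounds of $[t_1+1,t_2]$ (taking, say, $\nu=(\omega^3 n)^{2/3}$), the edges proposed in this window a.a.s.\ contain a binomial random subset in which each edge of $K_n$ is present independently with probability $q_0=(1-o(1))\,2\omega^3/n$. In this binomial model the events $\{ij\in E(H)\}$ depend on disjoint blocks of coordinates, hence are mutually independent, each of probability $q:=1-(1-q_0)^{\omega^4}=(1-o(1))\,2\omega^7/n$. Thus $H$ contains a random digraph $D(M,q)$ with independent arcs whose expected out- and in-degree is $d:=(M-1)q=\Theta(\omega^4)\to\infty$.

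\textbf{Step 2: reduction and sprinkling.} It now suffices to show that a.a.s.\ $D(M,q)$ has a directed cycle on at least $(1-\omega^{-1})M$ vertices. The required error $\omega^{-1}$ is very generous: the only unavoidable loss comes from the $\Theta(Me^{-d})=\Theta(Me^{-\Theta(\omega^4)})$ vertices of in- or out-degree $0$, and $e^{-\omega^4}=o(\omega^{-1})$, leaving ample room. To keep the randomness used for closing independent of the path, I would split the window into two halves, producing independent digraphs $D_1,D_2$ on $[M]$, each with arc probability $\Theta(\omega^7/n)$ and hence average degree $\Theta(\omega^4)$.

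\textbf{Step 3: the path and its closure.} In $D_1$ I would run a depth-first search following out-arcs, keeping the usual sets $U$ (unexplored), $S$ (stack) and $T$ (finished). When a vertex is moved from $S$ to $T$ it has no out-arc into the current $U$, and since $U$ only shrinks there are never arcs from $T$ to $U$; moreover the stack always spans a single directed path. A Krivelevich--Sudakov-type count of the exposed arc-slots then forces $|U|$ down to $o(\omega^{-1}M)$ while the stack grows to cover all but $o(M)$ vertices, yielding a directed path $v_1\cdots v_L$ in $D_1$ with $L\ge(1-\omega^{-2})M$. Finally I would close it using $D_2$: for $\alpha=\omega^{-2}/2$ there are at least $(\alpha L)^2$ ordered pairs $(v_j,v_i)$ with $i\le \alpha L$ and $j\ge(1-\alpha)L$, each an arc of $D_2$ independently with probability $\Theta(\omega^7/n)$; the expected number of such closing arcs is $\Theta(\alpha^2 L^2\omega^7/n)=\Theta(M)\to\infty$, so by independence a.a.s.\ at least one exists. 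Such an arc $v_j\to v_i$ yields a directed cycle $v_i v_{i+1}\cdots v_j v_i$ of length at least $(1-2\alpha)L\ge(1-\omega^{-1})M$, as required.

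\textbf{Main obstacle.} The delicate part is Step 3: making the directed DFS count rigorous when the average degree $d=\Theta(\omega^4)$ tends to infinity only very slowly, so that the unexplored residue is genuinely $o(\omega^{-1}M)$ and the stack really is an almost-spanning path. The splitting into $D_1,D_2$ is exactly what guarantees that the arcs used for closing are independent of the path; alternatively one may observe that DFS never queries arcs between two stack vertices, so the ``back-arcs'' of the final path remain unexposed and can be used directly. Everything else---the parameter estimates and the first-moment closing argument---is routine.
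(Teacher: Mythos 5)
Your Step 1 contains one genuine gap. The set of edges proposed during the window $[t_1+1,t_2]$ is a uniformly random subset of size $\omega^3n$ of $E(K_n)\setminus E_{t_1}$, where $E_{t_1}$ denotes the (random, strategy-independent) set of edges already proposed during the first $t_1=\omega^5 n$ rounds; it therefore cannot contain a binomial random subset of \emph{all} of $E(K_n)$, and your count of $|S'_i||S''_j|=\omega^4$ available edges per ordered pair $(i,j)$ is an overcount, since any edge between $S'_i$ and $S''_j$ already proposed before round $t_1$ can never reappear. For a single pair this costs only $o(1)$ edges in expectation, but you need a uniform statement over all $\Theta(\omega^{-6}n^2)$ pairs: a priori some block could have lost most of its $\omega^4$ edges, which would destroy the lower bound on the arc probability (the independence of the arcs, conditionally on the first $t_1$ rounds, survives, but the marginal probabilities do not). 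The paper patches exactly this point by first showing, via a coupling of $G_{t_1}$ with $G(n,p)$ and a first-moment bound, that a.a.s.\ every pair $(S'_i,S''_j)$ retains at least $\omega^4/2$ unproposed edges; with that event in hand your computation goes through with $q\ge(1-o(1))\omega^7/n$, which still gives average degree $\Theta(\omega^4)\to\infty$. You need this (or an equivalent) preliminary step.

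Beyond that, your route genuinely diverges from the paper's in the second half. The paper does not establish arc-independence at all: it computes the expected number of arcs of $H$, shows concentration by a second-moment estimate on the events $\cE_{i,j}$, and then invokes Theorem~1 of~\cite{KLS13} (a.a.s.\ $D(M,\omega^3/M)$ contains a cycle covering $(1-\omega^{-1})M$ vertices) together with the uniform/binomial comparison of Remark~\ref{rem comp}. Your Steps 2--3 instead reprove the long-cycle statement from scratch by a directed DFS plus sprinkling; this is essentially the argument underlying~\cite{KLS13}, so the approach is sound and has the virtue of being self-contained, but it is precisely the part you flag as delicate, and carrying out the DFS bookkeeping rigorously at average degree $\Theta(\omega^4)$ is nontrivial work that the citation avoids. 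Conversely, your derivation of mutual independence of the arcs from the disjointness of the edge-blocks between the (pairwise disjoint) sets $S'_i$ and $S''_j$ is arguably cleaner than the paper's variance computation, and would be a legitimate alternative to it once the consumed-edges issue above is repaired.
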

\begin{proof}
First of all, let $\cE$ be the event that, for every pair $i, j\in [M]$, there are at most $\omega^4/2$ of all edges between $S'_i$ and $S''_j$ that have been proposed until round $t_1$. We show that $\cE$ holds a.a.s. Indeed, by Remark~\ref{rem comp} one may couple $G_{t_1}$ and $G(n,p)$ for $p = \log n/n$ so that a.a.s.\ $G_{t_1}\subseteq G(n,p)$. Moreover, the expected number of balanced bipartite graphs on $2\omega^2$ vertices and at least $\omega^4/2$ edges in $G(n,p)$ is bounded from above by
\begin{equation*}
    \binom{n}{2\omega^2} \binom{2\omega^2}{\omega^2} \binom{\omega^4}{\omega^4/2} \left(\frac{\log n}{n}\right)^{\omega^4/2} \le n^{2\omega^2} 2^{2\omega^2+\omega^4} \left(\frac{\log n}{n}\right)^{\omega^4/2} = o(1).
\end{equation*}
Since this is an upper bound for $\Prob(\overline{\cE})$, $\cE$ holds a.a.s.\ and we condition on this event.

Now, fix $i,j\in [M]$ and denote by $E_{i,j}$ the set of edges between $S'_i$ and $S''_j$ that were not proposed until round $t_1$, and by $\cE_{i,j}$ the event that at least one edge in $E_{i,j}$ was constructed at some round in the interval $[t_1+1, t_2]$. Then, the probability that $H$ contains the edge $ij$ is given by
\begin{equation}\label{eq E_i,j}
\begin{split}
\Prob(\cE_{i,j}) 
&=\; 1 - \prod_{s=t_1+1}^{t_2} \left(1-\frac{|E_{i,j}|}{n(n-1)/2-s+1}\right)\\
&=\; 1 - \exp\left(-(2+o(1))\frac{|E_{i,j}|\omega^3}{n}\right) = (2+o(1))\frac{|E_{i,j}|\omega^3}{n}.   
\end{split}
\end{equation}
Moreover, for every pair $i_1, j_1, i_2, j_2\in [M]$ with $(i_1,j_1)\neq (i_2,j_2)$, $\Prob(\cE_{i_1,j_1}\cap \cE_{i_2, j_2})$ can be expressed as
\begin{align*}
& \sum_{u=t_1+1}^{t_2}\prod_{s=t_1+1}^{u-1} \left(1-\frac{|E_{i_1, j_1}| + |E_{i_2, j_2}|}{n(n-1)/2-s+1}\right) \frac{|E_{i_1, j_1}|}{n(n-1)/2-u+1}\left(1 - \prod_{s = u+1}^{t_2}\left(1-\frac{|E_{i_2, j_2}|}{n(n-1)/2-s+1}\right)\right)\\
+\;
& \sum_{u=t_1+1}^{t_2}\prod_{s=t_1+1}^{u-1} \left(1-\frac{|E_{i_1, j_1}| + |E_{i_2, j_2}|}{n(n-1)/2-s+1}\right) \frac{|E_{i_2, j_2}|}{n(n-1)/2-u+1}\left(1 - \prod_{s = u+1}^{t_2}\left(1-\frac{|E_{i_1, j_1}|}{n(n-1)/2-s+1}\right)\right),
\end{align*}
where $u$ denotes the first moment when an edge between $S'_{i_1}$ and $S''_{j_1}$, or between $S'_{i_2}$ and $S''_{j_2}$, has been proposed to Builder. 
Moreover, the first expression computes the probability that this edge goes between $S'_{i_1}$ and $S''_{j_1}$ and at least one edge between $S'_{i_2}$ and $S''_{j_2}$ is proposed at some of the next $t_2-u$ rounds, while the second expression computes the probability of the other scenario.
Using that for every $u\in [t_1+1, t_2]$ and $\ell\in [2]$ we have 
$$\prod_{s=t_1+1}^{u-1} \left(1-\frac{|E_{i_1, j_1}| + |E_{i_2, j_2}|}{\tbinom{n}{2}-s+1}\right) = 1-o(1) \text{ and } \prod_{s=u+1}^{t_2} \left(1-\frac{|E_{i_\ell, j_\ell}|}{\tbinom{n}{2}-s+1}\right) = 1 - (1+o(1))\frac{2(t_2-u)|E_{i_\ell, j_\ell}|}{n^2},$$
the above expression rewrites
\begin{align*}
(1+o(1))\frac{8|E_{i_1, j_1}||E_{i_2, j_2}|}{n^4}\sum_{u=t_1+1}^{t_2} (t_2-u) = (1+o(1)) \frac{4(t_2-t_1)^2|E_{i_1, j_1}||E_{i_2, j_2}|}{n^4},
\end{align*}
which by~\eqref{eq E_i,j} is also equal to $(1+o(1))\Prob(\cE_{i_1,j_1})\Prob(\cE_{i_2, j_2})$. Thus,
\begin{equation*}
\mathrm{Var}\left[\sum_{i,j\in [M]} \ind_{\cE_{i,j}}\right] = o\left(\mathbb E\left[\sum_{i,j\in [M]} \ind_{\cE_{i,j}}\right]^2\right),
\end{equation*}
which means that a.a.s.\ $H$ contains 
$$(1+o(1))\mathbb E\left[\sum_{i,j\in [M]} \ind_{\cE_{i,j}}\right] \ge \frac{\omega^7 M^2}{n}\ge \frac{\omega^4 M}{4}$$
edges. 
We conclude by Theorem~1 from~\cite{KLS13} implying that the binomial random directed graph on $M$ vertices and with edge probability $q = \omega^3/M$ a.a.s.\ contains a directed cycle covering $M-M/\omega$ vertices, and the fact that $H$ a.a.s.\ dominates such a graph by Remark~\ref{rem comp}.
\end{proof}

\noindent
\textbf{Stage 3.1: completing the picture for $d=2$.} ($(1+o(1))n\log n/2$ rounds, $o(n)$ accepted edges)\\

\noindent
To conclude the proof for $d=2$, note that a cycle containing almost all vertices in $H$ implies that there is a cycle in $G_{t_2}$ that contains $n-o(n)$ vertices. 
Indeed, for every $s\ge 2$, the existence of the cycle $i_1, \ldots, i_s$ in $H$ implies that, for every $j\in [s]$, a vertex $u_j\in S'_{i_j}$ was connected by an edge to a vertex in $v_{j+1} = S''_{i_{j+1}}$ (indices seen modulo $s$). 
Hence, the edges $(u_j v_{j+1})_{j\in [s]}$ and the subpaths from Stage 1 connecting $v_j$ to $u_j$ form a cycle $C$ containing almost all vertices in the $s$ used paths. 
Hence, applying Lemma~\ref{lemma eq union bound} (for $k=2$ and $S_i$ the vertex set of the cycle $C$) and Lemma~\ref{lem last step} (for $H = C$, which is a $2$-connected graph) shows that $(1+o(1))n\log n/2$ rounds are sufficient to construct two edges from every vertex outside $C$ to $C$ itself, which finishes the proof in the case $d=2$.\\

\noindent
\textbf{Stage 3.2: building an almost spanning almost $3$-connected graph over the long cycle for $d=3$.} ($o(n\log n)$ rounds, $\tfrac{3}{4} n + o(n)$ accepted edges)\\

\noindent
Denote by $C$ the cycle containing $\tfrac{3}{4}n - o(n)$ vertices in the case $d=3$ and set $t_3 = t_2 + n\log n/\omega$, $t_4 = t_3 + n\log n/\omega$ and $t_5 = t_4 + n\log n/\omega$. 
Also, let us divide the cycle $C$ into three paths of equal (or almost equal) length whose vertex sets we denote by $V_1, V_2$ and $V_3$, and denote by $V_4$ a set of vertices outside the cycle satisfying $|V_4| = \max(|V_1|, |V_2|, |V_3|)$ (recall that $|V(C)|\le \tfrac{3}{4}n - 1$, so there are at least $\max(|V_1|, |V_2|, |V_3|)$ vertices outside $C$).

Now, during the rounds in the interval $[t_2+1, t_3]$ (respectively $[t_3+1, t_4]$ and $[t_4+1, t_5]$), we let Builder greedily construct a matching between $V_1$ (respectively $V_2$ and $V_3$) and $V_4$. 
Note that this way, most of the vertices in $C$ obtain one additional neighbour while most vertices outside $C$ connect to three vertices in $C$. 
Denote by $\widehat C$ the graph obtained by restricting $G_{t_5}$ to the union of $C$ and the vertices with three neighbours of $C$.

\begin{lemma}\label{lemma binomial set}
The set of vertices in $C$ of degree $2$ in $\widehat C$ is stochastically dominated by a binomial random subset of $V(C)$ in which every element is included independently with probability $\omega^4/\log n$.
\end{lemma}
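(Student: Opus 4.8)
The plan is to prove the stronger statement that the set $D$ of degree-$2$ vertices of $\widehat C$ is stochastically dominated by a binomial random subset of $V(C)$ with inclusion probability $\omega^4(\log n)^{-1}$, i.e.\ that there is a coupling in which $D$ is a.a.s.\ contained in this subset. I would first record that the three matching phases are mutually independent, since they are built from the pairwise disjoint edge sets $V_1\times V_4$, $V_2\times V_4$, $V_3\times V_4$ revealed in the disjoint time intervals $[t_2+1,t_3]$, $[t_3+1,t_4]$, $[t_4+1,t_5]$. The reduction to a single-vertex estimate is then immediate: a cycle vertex $v\in V_i$ has degree $2$ in $\widehat C$ exactly when its unique possible extra edge is missing from $\widehat C$, that is, when either $v$ is left unmatched in phase $i$, or $v$ is matched to some $u\in V_4$ that fails to be matched in one of the other two phases (so that $u\notin V(\widehat C)$). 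Hence $\mathbb{P}(v\text{ has degree }2)$ is bounded, by a union bound, by the sum of three probabilities, each of the form that a single vertex is left unmatched in a single greedy phase.

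The core estimate concerns one phase, say the matching of $V_1$ with $V_4$ over the $T:=\omega^{-1}n\log n$ rounds of $[t_2+1,t_3]$, where $|V_1|=(1+o(1))n/4=|V_4|=:m$. Let $A_s$ denote the number of $V_4$-vertices still unmatched after round $s$; the count of matched pairs is a pure death process with one-step drift $-A_s^2\binom{n}{2}^{-1}$, so, writing $c=\binom{n}{2}^{-1}$, it concentrates around the solution $m/(1+cms)$ of the associated differential equation. A standard martingale concentration inequality (the increments being bounded by $1$) then gives that a.a.s., uniformly in $s\le T$, the realised trajectory agrees with $m/(1+cms)$ up to an error that contributes only $o(1)$ to the sum $\sum_{s=t_2+1}^{t_3}A_s\binom{n}{2}^{-1}$. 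Calling $\mathcal G$ this a.a.s.\ event (together with its analogues for the two other phases) and conditioning on it, I would reveal every edge not incident to a fixed $v\in V_1$, keeping the edges at $v$ fresh; then $v$ remains unmatched with conditional probability at most $\prod_{s}(1-A_s\binom{n}{2}^{-1})\le\exp\!\big(-\sum_s A_s\binom{n}{2}^{-1}\big)=(1+o(1))(1+cmT)^{-1}$. Since $cmT=(1+o(1))\tfrac{\log n}{2\omega}$, this equals $(1+o(1))\tfrac{2\omega}{\log n}$, and the reduction of the previous paragraph bounds the conditional probability that any fixed cycle vertex has degree $2$ by $(1+o(1))\tfrac{6\omega}{\log n}\le\omega^4(\log n)^{-1}$.

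Finally I would upgrade this per-vertex bound to the coupling. Ordering the vertices of $C$ and exposing them one by one, for the current vertex $v$ I reveal the edges at all previously treated vertices and at their phase-partners; because distinct cycle vertices have distinct partners and the phases are independent, this extra information disturbs the availability counts $A_s$ only within the $o(1)$ tolerance already built into $\mathcal G$, so the conditional probability that $v$ has degree $2$ given the statuses of the earlier vertices remains at most $\omega^4(\log n)^{-1}$. The standard domination of a sequence of events with uniformly bounded conditional probabilities by independent $\mathrm{Bernoulli}(\omega^4(\log n)^{-1})$ trials then produces the desired coupling. I expect this last step to be the main obstacle: the degree-$2$ events of different vertices are genuinely correlated through shared potential partners in $V_4$, and the argument closes only because the large gap between the true probability $\Theta(\omega/\log n)$ and the target $\omega^4(\log n)^{-1}$ leaves ample room to absorb both the death-process fluctuations and these correlations.
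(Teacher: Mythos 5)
Your reduction to per-vertex estimates and the death-process computation giving $\Prob(v\ \text{unmatched}) = (1+o(1))\,2\omega/\log n$ are sound, but the final step does not close. The ``standard domination of a sequence of events with uniformly bounded conditional probabilities by independent Bernoulli trials'' requires the bound $\Prob(\eps_k=1\mid \eps_1,\dots,\eps_{k-1})\le \omega^4(\log n)^{-1}$ to hold for \emph{every} history of positive probability, not only for typical ones, and here it fails badly for atypical histories: conditioning on, say, the first $|V_1|/2$ exposed vertices of $V_1$ all having degree $2$ forces the phase-one matching to be tiny, after which the next vertex is unmatched with conditional probability close to $1$. The same problem persists under your finer revelation scheme (exposing all edges at previously treated vertices and their partners): the bound holds only on a good set of atoms of the filtration, so you would need an explicit truncation or stopping-time argument (stop the domination at the first bad atom and show that this happens with probability $o(1)$). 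You gesture at this --- it is exactly the place where the correlations you flag must be absorbed --- but you do not carry it out, and as stated the ``uniformly bounded conditional probabilities'' hypothesis is simply not available.

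The paper avoids all of this with a symmetry observation worth noting: the greedy matching process is equivariant under permutations of $V_i$, so, conditionally on the set $S\subseteq V_4$ of vertices matched in all three phases, the set of vertices of $V_i$ not matched into $S$ is a \emph{uniformly random} subset of $V_i$ of deterministic size $|V_i|-|S|$. Since a.a.s.\ $|V_i|-|S|\le 3\omega^3(\log n)^{-1}|V(C)|$ (by the same geometric-waiting-time argument as in Lemma~\ref{lemma non max deg}, which also replaces your martingale concentration for the trajectory $A_s$), Lemma~\ref{lemma comp} immediately dominates this uniform random subset by a binomial one with parameter $\omega^4(\log n)^{-1}$. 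Exchangeability is precisely the structural fact that makes the coupling one line; if you wish to keep the sequential approach, you must supply the truncation argument above.
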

\begin{proof}
By the same argument as in Lemma~\ref{lemma non max deg}, we know that there are a.a.s.\ at most $\omega^{3} |V(C)|/\log n$ vertices in $C$ that do not participate in the matching between $V_1$ (respectively $V_2$ and $V_3$) and $V_4$ constructed during rounds $[t_2+1, t_3]$ (respectively $[t_3+1, t_4]$ and $[t_4+1, t_5]$). 
As a consequence, there are a.a.s.\ at least $|V_4| - 3\omega^3 |V(C)|/\log n - 1$ vertices in $V_4$ that have three neighbours in $C$ after round $t_5$. We condition on this event as well as on the set $S$ of these vertices.

Then, for every $i\in [3]$, the subset of vertices in $V_i$ that remains unmatched to a vertex in $S$ is distributed uniformly among all subsets of $V_i$ of size $|V_i|-|S|\le 1+3\omega^3 |V(C)|/\log n$. 
Hence, by Lemma~\ref{lemma comp}, one may a.a.s.\ stochastically dominate each of them by a binomial random subset of $V_i$ in which every element is chosen with probability $2\tfrac{1+3 \omega^3 |V(C)|/\log n}{\min(|V_1|,|V_2|,|V_3|)} \le \omega^4/\log n$, as desired.
\end{proof}

\begin{lemma}\label{lemma paths}
The graph $\widehat C$ is $2$-connected. Moreover, all cutsets $U$ in $\widehat C$ of size $2$ satisfy that $U\subseteq C$ and $\widehat C\setminus U$ contains two connected components, one of which is a path in $C$ consisting of vertices of degree $2$ in $\widehat C$.
\end{lemma}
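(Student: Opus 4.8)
The plan is to exploit the explicit structure of $\widehat{C}$. It consists of the cycle $C = V_1 \cup V_2 \cup V_3$ together with the set $S$ of vertices outside $C$ that received three neighbours, one in each of $V_1, V_2, V_3$. Thus every vertex of $S$ has degree exactly $3$ with all of its neighbours on $C$; every vertex of $C$ keeps its two cycle-edges and at most one edge towards $S$, so its degree in $\widehat{C}$ is $2$ or $3$; and there are no edges inside $C$ other than the cycle edges, since the only non-cycle edges come from matchings between some $V_i\subseteq C$ and $V_4$. I would record this description first, as it drives everything.

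For the $2$-connectivity, I would use the elementary fact that adjoining to a $2$-connected graph a new vertex incident to at least two of its vertices again yields a $2$-connected graph: deleting the new vertex returns the old graph, while deleting any old vertex leaves the new one still attached through one of its $\geq 2$ neighbours. Starting from the $2$-connected cycle $C$ and adjoining the vertices of $S$ one at a time, each joined to three vertices of $C$, this shows that $\widehat{C}$ is $2$-connected. The same argument applied after deleting a single vertex $v \in S$ shows that $\widehat{C} \setminus \{v\}$ is still $2$-connected, so no cutset of size $2$ can contain a vertex of $S$; hence every $2$-cutset $U$ satisfies $U \subseteq V(C)$.

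It then remains to describe the $2$-cutsets $U = \{u_1, u_2\} \subseteq V(C)$. Deleting $U$ from the cycle leaves exactly two arcs (paths) $A_1$ and $A_2$. The key point is a pigeonhole step: since $U$ meets at most two of the three parts $V_1, V_2, V_3$, at least one part $V_k$ contains no vertex of $U$, and being a contiguous arc of the cycle it lies entirely inside one of the two arcs, say $A_2$. Because every vertex of $S$ has a neighbour in $V_k \subseteq A_2$, the set $A_2 \cup S$ is connected. Consequently $U$ can disconnect $\widehat{C}$ only if the remaining arc $A_1$ has no neighbour in $S$ at all, i.e.\ every vertex of $A_1$ has degree $2$ in $\widehat{C}$; and in that case $\widehat{C} \setminus U$ has precisely the two components $A_1$ (a sub-path of $C$ consisting of degree-$2$ vertices) and $A_2 \cup S$, which is exactly the claimed description.

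The main obstacle, and really the only non-mechanical point, is this pigeonhole step: one must use that the $S$-vertices touch all three parts to argue that one fully surviving part $V_k$ glues $S$ to a single arc, leaving only a degree-$2$ arc free to split off. Once this is in place, the verification that there are exactly two components and that the small one is a genuine path of degree-$2$ vertices is immediate.
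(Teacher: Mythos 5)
Your proposal is correct and follows essentially the same route as the paper: build up $2$-connectivity by attaching the $S$-vertices (each with three neighbours on $C$) to the $2$-connected cycle, observe that no $2$-cutset can use a vertex of $S$, and then use the fact that a $2$-cutset misses at least one of $V_1,V_2,V_3$, so that the untouched part glues $S$ and one arc into a single component, forcing the other arc to consist entirely of degree-$2$ vertices. Your write-up is somewhat more explicit about the pigeonhole step and about identifying the two components, but the underlying argument is identical to the paper's.
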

\begin{proof}
The first statement follows from Lemma~\ref{lem last step} for $H = C$ (which is a 2-connected graph) and $G = \widehat C$.


Now, if $\widehat C = C$, the second statement holds trivially. Otherwise, let $\{v_1, v_2\}$ be a cutset in $\widehat C$. Then, since $C$ itself is a $2$-connected graph and all other vertices connect to $C$ by 3 edges, we must have that $\{v_1, v_2\}\subseteq V(C)$. 
Now, assume without loss of generality that $v_1\in V_1$ and $v_2\in V_1\cup V_2$. 
Suppose for contradiction that the path between $v_1$ and $v_2$ that is disjoint from $V_3$ contains a vertex $u$ of degree 3 in $\widehat C$. 
Then, there is a vertex in $V_3$ that has a common neighbour with $u$ in $\widehat C\setminus C$. 
Thus, all vertices in $C\setminus \{v_1, v_2\}$ are in the same connected component in $\widehat C\setminus \{v_1, v_2\}$, and since all vertices in $\widehat C\setminus C$ are connected to $C\setminus \{v_1, v_2\}$ by at least one edge, $C\setminus \{v_1, v_2\}$ must be a connected graph, which is a contradiction with the fact that $\{v_1, v_2\}$ is a cutset. This completes the proof.
\end{proof}

For every vertex $v\in V(C)$, denote by $P(v)$ the unique path in $C$ containing $v$ for which the first and the last vertex of $P(v)$ are of degree 3 in $\widehat C$, and all remaining vertices are of degree 2 in $\widehat C$.

\begin{corollary}\label{cor paths}
The following holds conditionally on the event $\widehat C\setminus C\neq \emptyset$: every graph $\widetilde C$ constructed from $\widehat C$ by connecting (by an edge) every vertex $v\in V(C)$ of degree $2$ in $\widehat C$ to a vertex in $\widehat C\setminus P(v)$, is $3$-connected.
\end{corollary}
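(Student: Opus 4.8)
The plan is to show that $\widetilde C$ has no cutset of size at most $2$; together with the fact that $\widetilde C$ has $\tfrac34 n - o(n) \ge 4$ vertices for $n$ large, this yields $3$-connectivity. First I would record the elementary fact that adding edges cannot destroy connectivity: since $\widetilde C$ is obtained from $\widehat C$ by adding edges on the same vertex set, every cutset of $\widetilde C$ is also a cutset of $\widehat C$. By Lemma~\ref{lemma paths}, $\widehat C$ is $2$-connected, hence so is $\widetilde C$, and in particular $\widetilde C$ has no cutset of size at most $1$. It therefore remains only to rule out cutsets of size exactly $2$.

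Next I would argue by contradiction, supposing $U = \{v_1, v_2\}$ is a cutset of $\widetilde C$. By the previous remark $U$ is also a cutset of $\widehat C$, so Lemma~\ref{lemma paths} applies and gives that $U \subseteq V(C)$ and that $\widehat C \setminus U$ has exactly two connected components, one of which is a nonempty path $Q \subseteq C$ all of whose vertices have degree $2$ in $\widehat C$.

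The key structural observation is that for every $v \in Q$ one has $Q \cup \{v_1, v_2\} \subseteq P(v)$. Indeed, $Q$ is a connected set of degree-$2$ vertices containing $v$, so it lies in the maximal run of degree-$2$ vertices through $v$, which is contained in $P(v)$; moreover each $v_i$ is adjacent in $C$ to an endpoint of $Q$, so it either extends this degree-$2$ run when $\deg_{\widehat C}(v_i)=2$, or is exactly the degree-$3$ endpoint of $P(v)$ on that side when $\deg_{\widehat C}(v_i)=3$, and in both cases $v_i\in P(v)$. I would then fix any $v\in Q$; it has degree $2$ in $\widehat C$, so the construction of $\widetilde C$ adds an edge from $v$ to some $w\in \widehat C\setminus P(v)$. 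By the observation, $w\notin Q\cup\{v_1,v_2\}$, so $w$ belongs to the component of $\widehat C\setminus U$ other than $Q$; hence the added edge $vw$ joins $Q$ to that other component inside $\widetilde C\setminus U$, contradicting that $U$ separates them. This eliminates cutsets of size $2$ and finishes the proof. Here the hypothesis $\widehat C\setminus C\neq\emptyset$ is what guarantees the existence of degree-$3$ vertices on $C$ (each vertex of $S$ supplies three of them), so that every $P(v)$ is a proper subpath of $C$ and $\widehat C\setminus P(v)$ is nonempty, making the construction well-defined.

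I expect the only delicate point to be the structural observation $Q\cup\{v_1,v_2\}\subseteq P(v)$, in particular the verification that the cutset vertices $v_1,v_2$ always lie in $P(v)$ regardless of their degree in $\widehat C$; once this is in place, everything else follows directly from Lemma~\ref{lemma paths} and the monotonicity of connectivity under edge addition.
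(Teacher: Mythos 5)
Your proposal is correct and follows essentially the same route as the paper: assume a cutset $\{v_1,v_2\}$ of $\widetilde C$, note it is a cutset of $\widehat C$, invoke Lemma~\ref{lemma paths} to identify the separated degree-$2$ path $Q$, observe that $Q\cup\{v_1,v_2\}\subseteq P(v)$ for $v\in Q$, and use the newly added edge from $v$ into $\widehat C\setminus P(v)$ to reconnect the two components. Your explicit case analysis for why $v_1,v_2\in P(v)$ (according to whether $\deg_{\widehat C}(v_i)$ is $2$ or $3$) is a welcome elaboration of a step the paper only asserts.
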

\begin{proof}
Suppose for contradiction that this is not the case. 
Then, there is a cutset $\{v_1, v_2\}$ of $\widetilde C$ (which is also a cutset of $\widehat C$). 
As in the proof of Lemma~\ref{lemma paths}, assume without loss of generality that $v_1\in V_1$ and $v_2\in V_1\cup V_2$. Then, the path between $v_1$ and $v_2$ that is disjoint from $V_3$ contains only vertices of degree 2 in $\widehat C$. 
Let $v$ be one vertex in this path. 
Then, by Lemma~\ref{lemma paths}, $v_1, v_2$ belong to $P(v)$. 
Moreover, since $v$ connects by an edge to $\widehat C\setminus P(v)$ (which itself is a connected graph because all vertices in $\widehat C\setminus C$ are connected by an edge to $V_3$), we conclude that $\widetilde C\setminus \{v_1, v_2\}$ is also connected graph, which finishes the proof of the lemma.
\end{proof}

Finally, we complete the proof for $d=3$ by combining Lemmas~\ref{lemma eq union bound} and~\ref{lem last step} (used in the same way as in the case $d=2$) with the following lemma.

\begin{lemma}
A.a.s.\ Builder has a strategy to construct a $3$-connected supergraph of $\widehat C$ by accepting $o(n)$ additional edges and waiting for $(1+o(1))n\log n/2$ more rounds.
\end{lemma}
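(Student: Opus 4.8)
The plan is to reduce the statement to a single application of Corollary~\ref{cor paths}: it suffices to give a strategy that, for every vertex $v\in C$ of degree $2$ in $\widehat C$, accepts one edge joining $v$ to some vertex of $\widehat C\setminus P(v)$, while accepting $o(n)$ edges in total and waiting only $(1+o(1))n\log n/2$ rounds. First note that Corollary~\ref{cor paths} applies, since a.a.s.\ $\widehat C\setminus C\neq\emptyset$: by Lemma~\ref{lemma binomial set} and the discussion preceding it, the set $S$ of vertices outside $C$ with three neighbours in $C$ has size $(1-o(1))n/4>0$. To bound the number of vertices we must treat, recall from Lemma~\ref{lemma binomial set} that the set of degree-$2$ vertices of $C$ is a.a.s.\ contained in a binomial random subset of $V(C)$ with inclusion probability $q=\omega^4(\log n)^{-1}$; since $\log\omega\ll\log\log n$ gives $\omega^4=(\log n)^{o(1)}$, there are a.a.s.\ $O(q\,|V(C)|)=O(\omega^4 n(\log n)^{-1})=o(n)$ such vertices, so Builder accepts only $o(n)$ edges.

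Next I would control the sets $P(v)$ uniformly. Working with the same binomial coupling, $P(v)$ is a maximal run of consecutive degree-$2$ vertices of $C$ together with its two degree-$3$ endpoints, so its length is at most the longest run of elements of the binomial subset along $C$, plus $2$. The probability that a fixed block of $\ell$ consecutive vertices of $C$ all lie in the binomial subset is at most $q^\ell$, and a union bound over the at most $n$ starting positions shows, via Markov's inequality, that a.a.s.\ no run has length exceeding $\ell_0:=2\log n/\log(1/q)$. Since $\log(1/q)=\log\log n-4\log\omega=(1-o(1))\log\log n$, this gives $\ell_0=O(\log n/\log\log n)$, so $|P(v)|=o(n)$ uniformly over all degree-$2$ vertices $v$. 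As $|V(\widehat C)|=|V(C)|+|S|=(1-o(1))n$, it follows that each target set $S_v:=V(\widehat C)\setminus P(v)$ satisfies $|S_v|\ge(1-\eps_2)n$ for some $\eps_2=\eps_2(n)=o(1)$.

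It remains to produce the edges. I would apply Lemma~\ref{lemma eq union bound} with $m=t_5=o(n\log n)$, setting $S_i=S_{w_i}$ when $w_i$ is a degree-$2$ vertex of $C$ and $S_i=V(\widehat C)$ otherwise (these sets may legitimately depend on the first $m$ rounds, as $\widehat C$ and the paths $P(v)$ are determined by round $t_5$), and choosing $\eps_1=\eps_1(n)=o(1)$ to dominate both $\eps_2$ and $\tfrac{\log\log n}{\log n}$, which is possible since $\eps_2=o(1)$. The interval $[m+1,M]$ then has length $M-m=(1+\eps_1)n\log n/2=(1+o(1))n\log n/2$, and with $k=1$ Lemma~\ref{lemma eq union bound} guarantees that a.a.s.\ every degree-$2$ vertex $v$ is offered at least one edge to $S_v=V(\widehat C)\setminus P(v)$ during these rounds. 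Builder's strategy is to accept, for each degree-$2$ vertex, the first such offered edge; this accepts $o(n)$ edges and, by Corollary~\ref{cor paths}, yields a $3$-connected supergraph $\widetilde C$ of $\widehat C$.

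The main obstacle is the uniform control of the sets $P(v)$: the application of Lemma~\ref{lemma eq union bound} is only legitimate because every $S_v$ has size $(1-o(1))n$, and this hinges precisely on the longest-run estimate showing that a run of degree-$2$ vertices along $C$ has length $O(\log n/\log\log n)=o(n)$. Once this is established, the edge count, the validity of the strategy, and the final $3$-connectivity are all immediate from the lemmas already proved. Combining this lemma with Corollary~\ref{cor last step}—whose hypothesis is exactly the boosting constructed here, realised within the same interval of length $(1+o(1))n\log n/2$—then produces a spanning $3$-connected graph and completes the case $d=3$.
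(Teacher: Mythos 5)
Your proposal is correct and follows essentially the same route as the paper's own proof: couple the degree-$2$ vertices of $C$ with the binomial subset from Lemma~\ref{lemma binomial set} to get $o(n)$ such vertices, bound $\max_v|P(v)|$ by a first-moment/union-bound estimate (the paper settles for the cruder bound $\log n$, which also suffices), and then apply Lemma~\ref{lemma eq union bound} with $S_i=V(\widehat C)\setminus P(v)$ followed by Corollary~\ref{cor paths}. The only difference is that you spell out a few details the paper leaves implicit (the non-emptiness of $\widehat C\setminus C$ and the precise choice of $m$, $M$, $\eps_1$, $\eps_2$), which is harmless.
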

\begin{proof}
Recall that, by Lemma~\ref{lemma binomial set}, a.a.s.\ one can stochastically dominate the set $\{v\in V(C): \deg_{\widehat C}(v)=2\}$ by a binomial random subset of $V(C)$ with probability $\omega^4/\log n$. We work in the binomial model. 
Then, by Lemma~\ref{lemma binomial set} and Markov's inequality, there are at a.a.s.\ at most $\omega^5 n/\log n$ vertices of degree 2 in $\widehat C$. 
Moreover, for every vertex $v$, $\Prob(|P(v)|\ge \log n)\le (\omega^4/\log n)^{\log n} = o(n^{-1})$, so by a union bound a.a.s.\ $|P(v)|\le \log n$ for every $v\in V(C)$ of degree 2 in $\widehat C$. 

Let us condition on all of the above a.a.s.\ events. 
Then, the statement of the lemma is an application of Lemma~\ref{lemma eq union bound} allowing to construct edges between the vertices $v\in \{w_i\in V(C): \deg_{\widehat C}(w_i) = 2\}$ to the sets $S_i = V(\widehat C\setminus P(v))$ and $v\in V(G\setminus\widehat C)$ to the sets $S_i = V(\widehat C)$.
\end{proof}

The proof of Theorem~\ref{thm main} is completed.\\

\noindent
\textbf{Note added.} After the completion of the current note, I was informed that the following asymptotic version of Conjecture~\ref{conj main} was also proven independently by the authors of~\cite{FKM22}. The proof is to appear shortly in an updated version of~\cite{FKM22}.

\begin{theorem}
For every $\eps > 0$ there is $d_0 = d_0(\eps)$ such that for every $d\ge d_0$, the following holds: 
if $t\ge (1 + \eps)n \log n/2$ and $b\ge  (1 + \eps)dn/2$, then there exists a $(t, b)$-strategy of
Builder such that $G_t$ is a.a.s.\ $d$-connected.
\end{theorem}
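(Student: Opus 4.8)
The plan is to deduce the statement from Corollary~\ref{cor last step} along the same lines as the case $d\ge 4$, taking advantage of the fact that a large value of $d$ together with the constant slack $\eps$ renders the ``almost $d$-connectivity'' step far more transparent. Concretely, I would have Builder construct, within $o(n\log n)$ rounds and while accepting only $dn/2 + o(n)$ edges, a graph on $(1-o(1))n$ vertices whose only obstructions to $d$-connectivity are local, i.e.\ concentrated around an $o(n)$-sized set of low-degree vertices, and then invoke Corollary~\ref{cor last step} to boost it to a spanning $d$-connected graph using $(1+o(1))n\log n/2$ further rounds and $o(n)$ further edges. Since $\eps>0$ is a fixed constant, the totals $dn/2 + o(n) \le (1+\eps)dn/2$ and $(1+o(1))n\log n/2 \le (1+\eps)n\log n/2$ are comfortably within budget for all large $n$.

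For the first stage I would reuse the construction from the case $d\ge 4$ essentially verbatim: run the auxiliary process consisting of $d$ independent near-perfect matchings, each built greedily in $\omega^{-1}n\log n$ rounds. By Lemma~\ref{lemma non max deg} all but $O(\omega^3(\log n)^{-1}n)=o(n)$ vertices then have degree exactly $d$, and conditioning on the coloured degree sequence exhibits the resulting graph $G$ as a coloured configuration model in which monochromatic half-edges are matched uniformly at random. Running the deletion process of Stage 2 (Lemma~\ref{lemma structural}, Lemma~\ref{lemma exp process} and Corollary~\ref{cor exp process}) produces an induced subgraph $G'$ of minimum degree $d-1$, all but $O(\omega^{-1}n)=o(n)$ of whose vertices have degree $d$. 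Percolating as in Lemma~\ref{lemma double edges} and Lemma~\ref{lemma repeated} yields the multigraph $G''$ that Builder can realise in the original process up to identifying multiple edges.

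The heart of the argument, and the one place where largeness of $d$ is genuinely used, is the analogue of Lemma~\ref{lemma d ge 4}: a first-moment bound showing that a.a.s.\ $G''$ has no cutset of size at most $d-1$ apart from small local ones. For a set $U$ with $|U|=s$ and $|N_{G''}(U)|\le d-1$, essentially all of the $\approx ds$ half-edges leaving $U$ must be matched inside $U\cup N_{G''}(U)$, a vertex set of size at most $s+d-1$; the probability of this in the configuration model is of order $((s+d)/n)^{\Theta(ds)}$. Hence the expected number of such sets is at most
\[
\binom{n}{s}\binom{n}{d-1}\left(\frac{C(s+d)}{n}\right)^{ds/2}
\le \exp\!\left(s\log\frac{en}{s} + d\log n - \frac{ds}{2}\log\frac{n}{C(s+d)}\right),
\]
and once $d$ exceeds a suitable absolute constant the negative term dominates uniformly over $3\le s\le n/2$, so the sum over $s$ is $o(1)$. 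This is exactly where large $d$ buys simplicity: for fixed small $d$ the factor $\binom{n}{d-1}$ and the intermediate regime of $s$ force the delicate splitting and local-limit argument of Lemma~\ref{lemma d ge 4}, whereas here a single crude estimate suffices. The genuinely dangerous sets that survive are precisely those meeting the $o(n)$ vertices of degree $<d$ (for instance a singleton of degree $d-1$), which are not ruled out and must be repaired in the boosting step, while short-cycle sparsity as in Lemma~\ref{lemma double edges} disposes of the few small dense sets. I expect this first-moment estimate, together with the bookkeeping of which small sets survive, to be the main obstacle.

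Finally, I would connect every vertex lying in a surviving bad set $U$ (necessarily of bounded size and meeting the $o(n)$ low-degree vertices) to $d+O(1)$ fresh neighbours, so that afterwards $|N_{G''}(U)|\ge |N_{G''}(v)|-(|U|-1)\ge d$ for each such $U$; since there are only $o(n)$ such vertices and only $o(n)$ vertices outside $G'$, Lemma~\ref{lemma eq union bound} (applied with $k=d+O(1)$ and $S_i=V(G'')$) supplies the required edges within $(1+o(1))n\log n/2$ rounds and $o(n)$ acceptances. Corollary~\ref{cor last step} then assembles these pieces into a spanning $d$-connected graph, which completes the proof and in fact yields $d_0$ as an absolute constant, more than the statement demands.
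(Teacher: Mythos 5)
The paper contains no proof of this statement: it appears only in the ``Note added'' as a result obtained independently by the authors of~\cite{FKM22}, and within the present paper it is an immediate consequence of Theorem~\ref{thm main}. Indeed, for any fixed $\eps>0$ and any fixed $d\ge 2$ one has $(1+\eps_n)n\log n/2\le (1+\eps)n\log n/2\le t$ and $(1+\eps_n)dn/2\le (1+\eps)dn/2\le b$ for all large $n$, so the strategy of Theorem~\ref{thm main}, extended by rejecting every edge proposed after round $(1+\eps_n)n\log n/2$, is already a $(t,b)$-strategy witnessing the claim with $d_0=2$. Your proposal instead re-derives the (weaker) statement from scratch by re-running the whole $d\ge4$ machinery; that is legitimate in principle, but it is a far longer route than the one-line deduction the paper's framework provides.

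Within your self-contained route there is a genuine gap at the step you yourself call the heart of the argument. The displayed bound $\binom{n}{s}\binom{n}{d-1}\bigl(C(s+d)/n\bigr)^{ds/2}$ comes from bounding each factor of the configuration-model product termwise by roughly $\tfrac{2(s+d)}{n}$, and for $s=\Theta(n)$ this is useless no matter how large $d$ is: as $s\to n''/2$ the base tends to $1$, so $\log\tfrac{n}{C(s+d)}$ tends to $\log(1/C)\le 0$, the ``negative term'' ceases to be negative, and the sum over $s$ near $n''/2$ blows up against $\binom{n}{s}\approx 2^n$. (Quantitatively, at $s=(1/2-\delta)n$ the termwise bound forces $d\gtrsim \delta^{-1}$, so no fixed $d_0$ covers all $s\le n''/2$.) This is precisely why Lemma~\ref{lemma d ge 4} splits the range at $\eps n$ and, for $s\in[\eps n,n''/2]$, replaces the termwise bound by the Stirling estimate $\bigl(\tfrac{(s/2)!((n-s)/2)!}{(n/2)!}\bigr)^{d-1}$; that refinement is needed for every $d$, not only for small $d$, so your claim that a single crude estimate suffices uniformly over $3\le s\le n/2$ once $d$ is large is false as stated. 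What large $d$ does buy is the intermediate regime: one can start the union bound at $s=3$ instead of $s=7$ and avoid most of the local-limit analysis of small sets. Even so, the remaining bookkeeping — that the surviving bad sets have bounded size and meet only the $o(n)$ low-degree or multi-edge vertices, so that the $k=d+O(1)$ repair via Lemma~\ref{lemma eq union bound} and Corollary~\ref{cor last step} applies — still needs an argument along the lines of the tree-neighbourhood computation closing Lemma~\ref{lemma d ge 4}; you flag this yourself, and it is fixable, but it is not free.
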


\noindent
\textbf{Acknowledgements.} The author is grateful to Ivailo Hartarsky and to the two anonymous referees for many useful comments and suggestions.

\small
\bibliographystyle{plain}

\normalsize

\end{document}